\documentclass{amsart}

\usepackage{graphicx}
\usepackage{amssymb}
\usepackage{mathrsfs}
\usepackage{amsmath}
\usepackage{amsthm}
\usepackage{braket}
\usepackage{tikz-cd}
\tikzset{
    labl/.style={anchor=south, rotate=90, inner sep=.5mm}
}
\usepackage{stmaryrd}
\usepackage{hyperref}
\usetikzlibrary{positioning}
\usepackage{cleveref}
\usepackage[numbers]{natbib}

\newtheorem{theorem}{Theorem}[section]
\newtheorem{proposition}[theorem]{Proposition}
\newtheorem{lemma}[theorem]{Lemma}
\newtheorem{cor}[theorem]{Corollary}
\newtheorem{conj}[theorem]{Conjecture}

\theoremstyle{definition}
\newtheorem{definition}[theorem]{Definition}

\newtheorem{question}[theorem]{Question}

\newtheorem{remark}[theorem]{Remark}

\def\gl{\mathop{\mathrm{Gl}}\nolimits}

\def\pr{\mathop{\mathrm{pr}}\nolimits}

\def\spec{\mathop{\mathrm{Spec}}\nolimits}
\def\id{\mathop{\mathrm{id}}\nolimits}
\usepackage[scr=boondoxo]{mathalfa}
\newcommand{\kk}{\mathop{\mathscr{k}}\nolimits}
\newcommand{\Sym}{\mathop{\mathrm{Sym}}\nolimits}
\newcommand{\Supp}{\mathop{\mathrm{Supp}}\nolimits}

\title[The Hitchin morphism for $K$-trivial varieties]{The Hitchin morphism for $K$-trivial varieties}
\author{Aryaman Patel and Dario Weissmann}
\date{\today}

\begin{document}

\begin{abstract}
    We study the Hitchin morphism for higher dimensional varieties and show that, for a certain class of varieties which we call \emph{$r$-small}, the set-theoretic image of the Hitchin morphism from the Dolbeault moduli space coincides with the spectral base. In other words, a stronger version of the conjecture of Chen and Ng\^o holds for this class of varieties, which includes $K$-trivial varieties.
    As part of the proof,
    we slightly modify the construction of spectral covers to obtain normal spectral covers. 
\end{abstract}

\maketitle

\section{Introduction}
  
We work over an algebraically closed field $\kk$ of characteristic zero.
For a smooth proper scheme $X$ over $\kk$, 
the Hitchin morphism is a map from the moduli stack $\mathscr{M}^r_X$ 
of Higgs bundles of rank $r$ to the \emph{Hitchin base} 
$\mathscr{A}^r_X:=\bigoplus_{i=1}^rH^0(X,\mathrm{Sym}^i\Omega^1_X)$,
which sends a Higgs bundle $(\mathcal{E},\theta)$ to the characteristic
polynomial of the Higgs field $\theta$.
We denote it by $h^r_X:\mathscr{M}^r_X\to\mathscr{A}^r_X$.

When $X$ is a curve, the Hitchin morphism is known to be surjective. Indeed, in this case $\Omega^1_X$ is a line bundle, so the integrability condition $\theta\wedge\theta=0$ is automatically satisfied by any vector bundle with a $\Omega^1_X$-twisted endomorphism. However, for higher dimensional $X$, the Hitchin morphism is in general far from being surjective. A natural question then is: \emph{can we describe the image of $h^r_X$ when $\mathrm{dim}X>1$?}

Recent progress has been made in this direction by Chen and Ng\^o in \cite{CN20}, where they show that $h^r_X$ factors through a closed subscheme $\mathscr{B}^r_X$ of $\mathscr{A}^r_X$, called the \emph{spectral base}, which is in general a non-linear subspace of smaller dimension. The morphism $\mathscr{M}^r_X\to\mathscr{B}^r_X$ is called the \emph{spectral data morphism} and is conjectured to be surjective. 

\begin{conj}[{\cite[Conjecture 5.2]{CN20}}]\label{conjecture}
For every point $s\in\mathscr{B}^r_X$, the fiber $(h^r_X)^{-1}(s)$ is non-empty.
\end{conj}

Although the conjecture has been formulated more generally for $G$-Higgs bundles where $G$ is a reductive group, we only treat the case $G=GL_r$ in this article. Some recent progress has been made towards proving Conjecture \ref{conjecture}. For example, Chen and Ng\^o proved it for ruled and elliptic surfaces in \cite{CN20}, and Song-Sun proved it more generally for all surfaces in \cite{SS24}.
The case of hyperelliptic varieties was treated by the authors
in \cite{PW26}.
In \cite{HL24} He and Liu proved a stronger version of the conjecture for rank $2$ Higgs bundles,
namely that the Hitchin morphism from the moduli stack of polystable rank $2$ Higgs bundles surjects onto the spectral base $\mathscr{B}^2_X$.
In \cite{HLM24} He, Liu, and Mok show that $\mathscr{B}^r_X=\{0\}$ for all $r$ when $X$ is a smooth projective quotient of a bounded symmetric domain. In particular, Conjecture \ref{conjecture} holds for such varieties.

The motivating question for the present article is:
for which projective varieties $X$ is the Hitchin morphism
from the moduli space $M^{P,G-ss}_X$ of Gieseker semistable
Higgs bundles with fixed Hilbert polynomial $P$
surjective onto the spectral base?
A natural choice for $P$ is the Hilbert polynomial $P_0$
of the trivial bundle of rank $r$,
in which case $M^{P_0,G-ss}_X=:M^r_{X,\mathrm{Dol}}$
is the \emph{Dolbeault moduli space}.
The following result partially answers this question.

\begin{theorem}[\Cref{ktrivial}]\label{thm-ktrivial}
    Let $X$ be a smooth projective variety whose canonical divisor
    $K_X$ is numerically trivial.
    Then for all $r\in\mathbb{N}$ the restricted Hitchin morphism
    $h^r_{X,\mathrm{Dol}}:M^r_{X,\mathrm{Dol}}\to\mathscr{B}^r_X$
    is surjective.
\end{theorem}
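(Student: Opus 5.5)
Given $s\in\mathscr{B}^r_X$, I will build a Higgs sheaf with characteristic polynomial $s$ out of the spectral cover, and then show that it can be taken semistable with the Hilbert polynomial of $\mathcal{O}_X^r$.

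\textbf{Construction of the Higgs sheaf.} By the Chen--Ng\^o construction, $s$ determines a spectral cover $X_s\subset T^*X$ that is finite of degree $r$ over $X$. By \cite{CN20}, on the locus where $X_s$ is reduced, Cohen--Macaulay, etc., pushing forward a line bundle gives a Higgs bundle with characteristic polynomial $s$. In general $X_s$ is badly behaved, so I will work instead with the normalization $\widetilde X_s$ of $(X_s)_{\mathrm{red}}$, recording multiplicities. Each irreducible component $Z_j$ of $(X_s)_{\mathrm{red}}$ is finite of degree $d_j$ over $X$ and has multiplicity $m_j$, with $\sum_j m_jd_j=r$. The characteristic polynomial factors accordingly as $s=\prod_j s_j^{m_j}$.

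\textbf{The key step.} Let $\pi_j:\widetilde Z_j\to X$ be the normalization map. The tautological section of $\pi_j^*\Omega^1_X$ gives a Higgs field on $\mathcal{E}_j:=\pi_{j*}\mathcal{O}_{\widetilde Z_j}$. Then $\mathcal{E}_j$ is a reflexive (in fact, since $\widetilde Z_j$ is normal, torsion-free and $S_2$) sheaf with characteristic polynomial $s_j$. Integrability holds automatically because the Higgs field comes from multiplication in a commutative algebra. Taking $\bigoplus_j\mathcal{E}_j^{\oplus m_j}$ gives a Higgs sheaf with characteristic polynomial $s$, at least on the big open set where everything is flat. Characteristic polynomials of torsion-free sheaves are determined in codimension one, so the equality extends to all of $X$.

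\textbf{Semistability and the Dolbeault condition.} This is where $K_X\equiv0$ enters, and I expect it to be the main obstacle. The Dolbeault moduli space needs semistable Higgs bundles with vanishing Chern classes, so I must replace $\mathcal{E}_j$ by an object with trivial numerical invariants.

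First I would use the trace map $\pi_{j*}\mathcal{O}\to\mathcal{O}_X$. In characteristic zero this splits off $\mathcal{O}_X$ as a Higgs subsheaf with zero field, so $\mathcal{E}_j=\mathcal{O}_X\oplus \mathcal{F}_j$. Grothendieck duality gives $(\pi_{j*}\mathcal{O})^\vee\cong\pi_{j*}\omega_{\widetilde Z_j/X}$. When the ramification divisor is nontrivial, $\det\mathcal{E}_j$ has negative degree (it equals minus half the branch divisor), so this sheaf is unsuitable.

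The fix I would try is this: $K_X\equiv 0$ forces the spectral covers to be \'etale in codimension one. The coefficients $s_j$ are symmetric differentials, and on a variety with $K_X$ numerically trivial, the Beauville--Bogomolov decomposition together with results of Kebekus--Peternell/Druel shows that they are parallel. A parallel spectral polynomial has constant discriminant behaviour, so the cover $\widetilde Z_j\to X$ is unramified in codimension one, hence \'etale by purity of the branch locus. Then $\mathcal{E}_j$ is locally free and flat: it is the pushforward of $\mathcal{O}$ along a finite \'etale cover, so it is a local system and all its Chern classes vanish. The Higgs field is parallel.

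A Higgs bundle that is a flat vector bundle with a Higgs field is slope semistable, and after passing to a Jordan--H\"older filtration it becomes Gieseker semistable with Hilbert polynomial $P_0$. The closed point of $M^r_{X,\mathrm{Dol}}$ represented by its associated graded still has characteristic polynomial $s$, which proves the theorem.

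The weakest link is the claim that every point of $\mathscr{B}^r_X$ yields a cover \'etale in codimension one. If that fails, I would try to prove that the relevant sheaves are $r$-small, meaning their Chern classes vanish numerically, via a Bogomolov-type equality argument using $K_X\equiv0$.
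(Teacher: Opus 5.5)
Your proof is correct in substance, but its key step differs from the paper's. Both arguments factor $s=\prod_j s_j^{m_j}$ into irreducible factors with $D(s_j)\neq 0$ (Lemma~\ref{zerodisc}). Both then show that the (normalized) spectral cover of each $s_j$ is finite \'etale. Both conclude from the fact that the pushforward of $\mathcal{O}$ along a finite \'etale cover is \'etale trivializable, hence polystable with vanishing Chern classes (Lemma~\ref{polystability}).

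\textbf{How the paper gets \'etaleness.} The argument is algebraic. By \cite[Theorem A]{G16}, $\Omega^1_X$ is slope-semistable of slope $0$, so $\Sym^{k(k-1)}\Omega^1_X$ is semistable of slope $0$. A nonzero section vanishing along a divisor would saturate to a rank-one subsheaf of degree $\ge 1$, which is impossible. Hence $V(D(s_j))$ has codimension $\ge 2$. The branch locus of the normalized cover lies in $V(D(s_j))$ (Lemma~\ref{branchlocus}), so purity of the branch locus forces it to be empty.

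\textbf{How you get \'etaleness.} You use the Ricci-flat metric. By Yau's theorem and the Bochner principle, every holomorphic section of $\Sym^N\Omega^1_X$ is parallel. So $D(s_j)$ has constant pointwise norm and, being nonzero, vanishes nowhere. This gives more than the paper needs: the unnormalized cover $X_{s_j}$ is already \'etale by \cite[Proposition 6.1]{CN20}, so neither normalization nor purity is required.

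\textbf{What each route buys.} Your argument is transcendental, so you must invoke the Lefschetz principle to work over an arbitrary $\kk$, and it is tied to Ricci-flatness. The paper's slope argument uses only semistability and the bound $\mu<1$, which is why it extends to all $r$-small varieties (Theorem~\ref{etaleconj}).

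\textbf{Points to tighten.}

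The justification for parallelism is the Bochner principle. It is not the Beauville--Bogomolov decomposition or the decomposition theorems you cite. Also make ``constant discriminant behaviour'' precise: $D(s_j)$ is itself a holomorphic section of $\Sym^{d_j(d_j-1)}\Omega^1_X$, hence parallel, hence nowhere zero.

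Your assertion that a flat bundle with a Higgs field is semistable is false for general flat bundles. It holds here only because the monodromy is finite. The resulting bundle is then already Gieseker semistable with Hilbert polynomial $P_0$, so no Jordan--H\"older step is needed to make it semistable.

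Your $m_j$ must be the multiplicity of the roots, equivalently of $s_j$ in the factorization of $s$. It cannot be the scheme-theoretic multiplicity of $Z_j$ in $X_s$. For $d=\dim X\ge 2$ the spectral cover is not flat where roots collide: the fiber of the Cayley scheme over $[\omega,\omega]$ is $\spec \kk[y_1,\dots,y_d]/\mathfrak{m}^2$, of length $d+1$. So $\sum_j m_jd_j=r$ fails under the scheme-theoretic reading.

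The trace-splitting digression and the fallback plan can be dropped.
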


The surjectivity of $h^r_{X,\mathrm{Dol}}$ actually holds
for a larger class of varieties that satisfy a geometric condition
on the vanishing loci of symmetric differentials. 
Theorem \ref{thm-ktrivial} is thus a special case
of the following more general statement.

\begin{theorem}[\Cref{generalprop}]\label{generalthm}
    Let $X$ be a smooth projective scheme and 
    let $r\in\mathbb{N}$ be fixed.
    Suppose that $\mathrm{Sym}^{k(k-1)}\Omega^1_X$
    does not have a non-zero global section whose vanishing
    locus has codimension $1$ in $X$, for all $1\le k\le r$.
    Then the restricted Hitchin morphism
    $h^k_{X,\mathrm{Dol}}:M^k_{X,\mathrm{Dol}}\to\mathscr{B}^k_X$
    is surjective for all $1\le k\le r$.
\end{theorem}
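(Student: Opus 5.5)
Given $s\in\mathscr{B}^k_X$, I would construct a Higgs bundle as the pushforward of a sheaf on a spectral cover, and then verify that it is Gieseker semistable with the Hilbert polynomial of $\mathcal{O}_X^{k}$.

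First, the spectral cover. The point $s$ gives a closed subscheme $X_s\subset T^*X=\mathbb{V}(T_X)$, finite and flat of degree $k$ over $X$; the defining property of the spectral base says that $X_s$ is cut out compatibly. I would take the normalization $\pi:\widetilde{X}_s\to X$ of (the reduced components of) $X_s$. This is the modified, normal spectral cover mentioned in the abstract, adjusted to account for multiplicities. I would equip it with the tautological one-form: $\widetilde{X}_s$ maps to $T^*X$, so the pullback of the Liouville form gives $\pi^*\Omega^1_X$-valued multiplication on $\mathcal{O}_{\widetilde X_s}$. The Higgs sheaf $\mathcal{E}=\pi_*\mathcal{O}_{\widetilde X_s}$ then has characteristic polynomial $s$, taking the multiplicity structure into account. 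Because $\widetilde X_s$ is normal, it is $S_2$, but it need not be Cohen--Macaulay, so $\mathcal{E}$ is only reflexive and may fail to be locally free.

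Second, I would control the determinant and the Chern classes through the discriminant. Let $\Delta_s\in H^0(X,\Sym^{k(k-1)}\Omega^1_X)$ be the discriminant section of the characteristic polynomial. This is presumably the reason for the exponent $k(k-1)$. Two cases arise.

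If $\Delta_s\neq 0$, then by hypothesis its zero locus has codimension at least $2$. Hence $\pi$ is étale in codimension $1$, and by purity of the branch locus (Zariski--Nagata) $\widetilde X_s\to X$ is étale everywhere, because $X$ is smooth. Therefore $\mathcal{E}$ is locally free, and $\pi_*\mathcal{O}$ of a finite étale cover is a flat bundle. In particular it has vanishing Chern classes, so its Hilbert polynomial equals $P_0$. Moreover, $\mathcal{E}$ is semistable, indeed polystable, as the pushforward of the trivial line bundle under an étale map. So $(\mathcal{E},\theta)$ defines a point of $M^k_{X,\mathrm{Dol}}$ over $s$.

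If $\Delta_s=0$, the characteristic polynomial has repeated factors. I would then factor $s$ into irreducible components with multiplicities, $s=\prod s_i^{m_i}$, where each $s_i\in\mathscr{B}^{k_i}_X$ has $k_i<k$ or nonzero discriminant. Here I expect to use the fact that the spectral base is compatible with this factorization, i.e.\ that each factor is again in the spectral base. I would then proceed by induction on $k$, which is why the hypothesis is imposed for all $1\le k\le r$. Direct sums of the bundles already constructed give a Higgs bundle $\bigoplus \mathcal{E}_i^{\oplus m_i}$ with characteristic polynomial $s$. Each summand has Hilbert polynomial of the trivial bundle of its rank and is semistable, so the direct sum is Gieseker semistable with polynomial $P_0$. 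In the base case $k=1$, $s$ is a one-form $\omega$, and $(\mathcal{O}_X,\omega)$ works.

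The main obstacle is the reduction step: showing that a point of $\mathscr{B}^k_X$ with vanishing discriminant genuinely factors over $X$, rather than only over the function field, into factors lying in smaller spectral bases whose discriminants are nonzero. I would first try to do this generically over $k(X)$. Then I would use normality of $X$ and integrality of the coefficients to show that the factors have coefficients in $H^0(\Sym^i\Omega^1_X)$. Since the factors are monic polynomials dividing an integral one, this should follow by integral closedness of the symmetric algebra. Finally, I would check that the factors satisfy the spectral-base condition, for example using the description of $\mathscr{B}$ via $\Sym^k$ of $T^*X$. A second delicate point is applying purity to the possibly non-Cohen--Macaulay normalization. Purity only requires normality of the cover and regularity of $X$, so this should be fine.
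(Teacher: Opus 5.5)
Your proposal is correct and follows the paper's proof. When the discriminant is nonzero, you normalize the dominant part of the spectral cover and use the hypothesis together with purity of the branch locus to obtain a finite étale cover, whose pushforward is an étale-trivializable (hence polystable, Chern-class-free) Higgs bundle; when the discriminant vanishes, you factor $s$ into lower-rank spectral data and take direct sums, and your generic factorization over $k(X)[\xi_1,\dots,\xi_d]$, the Gauss-lemma/normality extension of the coefficients, and the generic check of the closed spectral-base condition are a valid substitute for the paper's argument via the components of $X_s$. One correction that does not affect the argument: $X_s\to X$ is finite but not flat in general (the paper stresses this), and you never actually use flatness.
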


In fact, a stronger statement holds -- every fiber of 
$h^k_{X,\mathrm{Dol}}$ contains a Higgs bundle 
whose underlying vector bundle is semistable and 
has vanishing Chern classes 
(see Proposition \ref{generalprop} and Remark \ref{ettriv}).  
The proof relies essentially on a modification
of the spectral cover construction and purity of branch locus,
which is recalled in the next section.
Although many varieties satisfy the hypothesis of Theorem \ref{generalthm},
the condition that all non-zero global sections of 
$\mathrm{Sym}^{r(r-1)}\Omega^1_X$ have vanishing locus 
of codimension $\ge2$ in $X$ is hard to check.
This motivates the notion of \emph{$r$-small varieties}.

\begin{definition}\label{rsmall}
    Let $X$ be a smooth projective variety and
    let $r\in\mathbb{N}$ be fixed.
    Then $X$ is said to be \emph{$r$-small} 
    if $\Omega^1_X$ is slope-semistable and 
    $\mathrm{Sym}^{k(k-1)}\Omega^1_X$ satisfies 
    the slope inequality $\mu(\mathrm{Sym}^{k(k-1)}\Omega^1_X)<1$
    for all $1\le k\le r$. 
\end{definition}

\begin{remark}
    Note that in characteristic $0$, a vector bundle $\mathcal{V}$ being
    semistable implies that the bundle $\mathrm{Sym}^j\mathcal{V}$ 
    is semistable for all $j\ge0$.
\end{remark}

A standard argument shows that any $r$-small variety
satisfies the hypothesis of Theorem \ref{generalthm}.
It follows that the Hitchin morphism
$h^r_{X,\mathrm{Dol}}:M^r_{X,\mathrm{Dol}}\to\mathscr{B}^r_X$
is surjective for any $r$-small variety $X$.
Since $\Omega^1_X$ is known to be slope-semistable of slope $0$ 
for any 
projective variety $X$ with numerically trivial canonical divisor $K_X$ (e.g. by \cite[Theorem A]{G16}),
it is immediate that any such variety is $r$-small.

In general, the restricted Hitchin morphism $h^r_{X,\mathrm{Dol}}$ 
from the Dolbeault moduli space is not surjective 
(see \cite[Example 3.4 (5)]{HL24}).
It would then be interesting to determine
necessary and sufficient conditions
that $X$ must satisfy so that $h^r_{X,\mathrm{Dol}}$ is surjective.
We believe that this question can be answered
once we have more information on the
geometric and topological properties of the spectral base.

In \cite{SS24}, Song-Sun prove that the spectral base is invariant under smooth birational modifications. We use this along with the standard argument of resolving the singularities
of a torsion-free sheaf to reformulate
\Cref{conjecture} as follows.

\begin{conj}[equivalent to Conjecture \ref{conjecture}]
    The set-theoretic image $\mathrm{im}(h^r_X)$ of the Hitchin morphism is invariant under smooth birational modifications.
\end{conj}

\section*{Acknowledgements}
AP acknowledges support from the Deutsche Forschungsgemeinschaft (DFG, German Research Foundation) -- Project ID 286237555 (TRR 195) and Project ID 530132094. He also thanks Vladimir Lazi\'c for helpful comments on an earlier draft of this article.

\section*{Notation}
We work over an algebraically closed base field $\kk$ of characteristic $0$.
By \emph{variety} we mean an integral separated $\kk$-scheme of finite type.
Given an affine group scheme $G$ over $\kk$ acting on an affine scheme
$A$, we denote the quotient stack by $[A/G]$ which over a scheme $S$
parametrizes $G$-torsors over $S$
together with a $G$-invariant morphism to $A$.
We denote by $\mathrm{B}G$ the quotient stack $[\spec(\kk)/G]$,
where we equip the point with the trivial action.

For a projective variety $X$ we implicitly fix a polarization
$\mathcal{O}_X(1)$ and define the notion
of Gieseker (semi)stability, slope-(semi)stability, Hilbert polynomial,
and degree with respect to $\mathcal{O}_X(1)$.

\section{Spectral covers}\label{sec-spectral covers}
In this section, we recall the definitions of the Hitchin base,
the spectral base, 
and the construction of spectral covers from \cite[Section 6]{CN20}.
We also define 
a stratification of the spectral base in terms of multiplication
of characteristic polynomials, similar to the stratification
on the Chow scheme defined in \cite{SS24},
and analogous to the stratification of the Hitchin base defined in \cite{PW26}.

The spectral covers arising from
the open locus of irreducible spectral data are close to being integral,
see \Cref{lemma-generically-integral} for the precise formulation.
Further, we recall the discriminant and show that
we can factor every spectral datum into a product of irreducible 
spectral data with non-zero discriminant, see \Cref{zerodisc}.

\subsection{The spectral base}
In this subsection, we recall the definitions of the Hitchin base,
the spectral base, and the construction of spectral covers from 
\cite[Section 6]{CN20}.
We refer the reader to loc. cit. for a complete and rigorous treatment.

Let $X$ be a smooth variety of dimension $d$,
and let $r\in\mathbb{N}$ be fixed.
Define $A^r$ as the affine space
$\mathbb{A}^d\times\mathrm{Sym}^2(\mathbb{A}^d)\times\dots\times
\mathrm{Sym}^r(\mathbb{A}^d)$
and $B^r$ as the Chow scheme $\mathrm{Chow}^r(\mathbb{A}^d):=(\mathbb{A}^d)^r\sslash S_r$,
which classifies $0$-dimensional cycles of length $r$ in $\mathbb{A}^d$.
Here, $S_r$ denotes the symmetric group in $r$ elements
and acts via permutation. 

A closed point $\omega\in B^r$
can be represented as an unordered tuple of $r$ closed points 
$\omega=[\omega_1,...,\omega_r]$, 
where $\omega_i\in\mathbb{A}^d$ for $1\le i\le r$.
We denote the $i$-th elementary symmetric polynomial 
in $\omega_1,\dots,\omega_r$ 
by $\sigma_i(\omega)=\sigma_i(\omega_1,\dots,\omega_r)
\in\mathrm{Sym}^i(\mathbb{A}^d)$. 
For example, $\sigma_1(\omega)=\omega_1+\dots+\omega_r\in\mathbb{A}^d$,
$\sigma_2(\omega)=\sum_{i<j}\omega_i\cdot\omega_j\in\mathrm{Sym}^2
(\mathbb{A}^d)$,
and $\sigma_r(\omega)=
\omega_1\cdot\omega_2\cdots\omega_r\in\mathrm{Sym}^r(\mathbb{A}^d)$.
By \cite[Theorem 4.1]{CN20} the morphism
\begin{align}\label{closed-embedding}
     B^r &\to A^r\\
     [\omega_1,\dots,\omega_r] &\mapsto
     (\sigma_1(\omega),\dots,\sigma_r(\omega))\nonumber
\end{align}
is a closed immersion. 

The natural $\gl_d$-action on $\mathbb{A}^d$
induces compatible $\gl_d$-actions on $A^r$ and $B^r$.
Consider the quotient stacks $\mathscr{A}^r:=[A^r/\gl_d]$ 
and $\mathscr{B}^r:=[B/\gl_d]$.
Then we define the Hitchin base
$\mathscr{A}^r_X:=\mathscr{H}om(X,\mathscr{A}^r)
\times_{\mathscr{H}om(X,\mathrm{B}\gl_d)}\mathrm{Spec}(\kk)$
and the spectral base 
$\mathscr{B}^r_X:=\mathscr{H}om(X,\mathscr{B}^r)
\times_{\mathscr{H}om(X,\mathrm{B}\gl_d)}\mathrm{Spec}(\kk)$,
where the cotangent bundle $\Omega^1_X$ corresponds to the morphism $\mathrm{Spec}(\kk)\to\mathscr{H}om(X,\mathrm{B}\gl_d)$. 

We identify the Hitchin base with the affine space
$\bigoplus_{i=1}^r H^0(X,\mathrm{Sym}^i\Omega^1_X)$ and the spectral base with the space of sections
$X\to\mathrm{Chow}^r(\Omega^1_X/X)$ of the natural map
$\mathrm{Chow}^r(\Omega^1_X/X)\to X$, where 
$\mathrm{Chow}^r(\Omega^1_X/X):=
(\Omega^1_X\times_X\dots\times_X\Omega^1_X)\sslash S_r$ 
is the relative Chow scheme that classifies 
$0$-dimensional cycles of length $r$ 
in the fibers of the natural morphism $\Omega^1_X\to X$. 
From the closed immersion (\ref{closed-embedding}),
we obtain a closed immersion
$\mathscr{B}^r_X\hookrightarrow\mathscr{A}^r_X$.

Next, we recall the construction of the spectral cover
corresponding to a point $s\in\mathscr{B}^r_X$ in the $\gl_r$ case,
as outlined in \cite[Section 6]{CN20}.
Consider the Cayley morphism
\begin{align*}
    \chi: B^r\times\mathbb{A}^d\to \mathrm{Sym}^r(\mathbb{A}^d)
\end{align*}
given by 
\[
\chi([\omega_1,...,\omega_r],y)=(y-\omega_1)\cdots(y-\omega_r)=y^r-\sigma_1(\omega)y^{r-1}+\sigma_2(\omega)y^{r-2}+\dots+(-1)^r\sigma_r(\omega).
\]
The Cayley scheme is then defined as the fiber over 
$0\in\mathrm{Sym}^r(\mathbb{A}^d)$ of the Cayley morphism, and is denoted by
\[
B^{r,\bullet}:=\mathrm{Cayley}^r(\mathbb{A}^d):=
\chi^{-1}(0)\subset B^r\times\mathbb{A}^d.
\]
There is a natural projection map 
$\pr:B^{r,\bullet}\to B^r$
called the \emph{universal spectral cover} of
$B^r$.

By \cite[Proposition 6.1, (1) and (2)]{CN20} the morphism $\pr$ is finite, surjective, and \'etale exactly
over the open subset $B^{r,\circ}$ of $B^r$
consisting of those elements $[\omega_1,\dots,\omega_r]$
without multiplicity, i.e., $\omega_i\neq\omega_j$ for $i\neq j$.
Since $\pr$ is $\gl_d$-equivariant, we obtain a morphism $[B^{r,\bullet}/\gl_d]\to[B^r/\gl_d]$, which is \'etale exactly 
over the open substack $[B^{r,\circ}/\gl_d]\subset[B^r/\gl_d]$.

Recall that for each $\kk$-point $s\in\mathscr{B}^r_X$,
we have a morphism $s:X\to[B^r/\gl_d]$.
The \emph{spectral cover} $X_s$ corresponding to $s$
is defined as the following fiber product
\[
\begin{tikzcd}
X_s\arrow{d}[swap]{\pi_s}\arrow{r} &
{[B^{r,\bullet}/\gl_d]}\arrow{d}{}\\
X\arrow{r}{s} & {[B^r/\gl_d]}.
\end{tikzcd}
\]
 
The fiber of $\pi_s$ over any point 
$x\in X$ is the set of roots of the polynomial 
$P(s):=y^r-s_1y^{r-1}+s_2y^{r-2}+\dots+(-1)^rs_r$ at $x$. 
The spectral cover factors as $\pi_s:X_s\to\Omega^1_X\to X$, where 
$\Omega^1_X\to X$ is the natural projection and
we view $\Omega^1_X$ as the scheme corresponding
to the $\mathcal{O}_X$-algebra
$\bigoplus_{i=0}^{\infty} \mathrm{Sym}^{i}(\Omega^1_X)^{\lor}$.

Note that $X_s\to X$ is finite
since $[B^{r,\bullet}/\gl_d]\to[B^r/\gl_d]$ is finite. 
The coherent sheaf $\pi_{s*}\mathcal{O}_{X_s}$ on $X$ 
is naturally equipped with a 
Higgs field whose characteristic polynomial is $P(s)$.

However, $\pi_s$ is not necessarily flat because
$X_s$ is not necessarily Cohen-Macaulay, so $\pi_{s*}\mathcal{O}_{X_s}$ may not be locally free.
More precisely, the morphism $\pi_s$ is \'etale over the open subset $X^\circ$ of $X$
that is mapped into $[B^{r,\circ}/\gl_d]$ by $s$.
It is unclear whether $\pi_s$ is flat over the locus $X\setminus X^\circ$.

Although the spectral base can be described quite explicitly
as a closed subscheme of the Hitchin base,
little is known about it's geometry and topology.
For example, it is not known whether $\mathscr{B}^r_X$ is irreducible.

We know that $\mathscr{B}^r_X$ admits a $\mathbb{G}_m$-action,
given by scaling. As a consequence, we observe that it is connected.

\begin{lemma}
    The spectral base is connected.
\end{lemma}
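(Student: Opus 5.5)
We use the $\mathbb{G}_m$-action by scaling and show that every point of $\mathscr{B}^r_X$ lies in the same connected component as the origin $0\in\mathscr{B}^r_X$.

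First we make the action precise. On $\mathbb{A}^d$, the scalar $t\in\mathbb{G}_m$ acts by $y\mapsto ty$. This action commutes with the $\gl_d$-action, since it is the action of the central torus of $\gl_d$. It therefore induces a $\gl_d$-equivariant action on $B^r$, namely $[\omega_1,\dots,\omega_r]\mapsto[t\omega_1,\dots,t\omega_r]$. On $A^r$ the induced action is $(a_1,\dots,a_r)\mapsto(ta_1,t^2a_2,\dots,t^ra_r)$, and the closed immersion \eqref{closed-embedding} is equivariant because $\sigma_i(t\omega)=t^i\sigma_i(\omega)$. Passing to $\mathscr{H}om(X,-)$, we get a $\mathbb{G}_m$-action on $\mathscr{B}^r_X$ compatible with the weighted scaling action on the vector space $\mathscr{A}^r_X=\bigoplus_i H^0(X,\mathrm{Sym}^i\Omega^1_X)$, given by $s=(s_1,\dots,s_r)\mapsto(ts_1,\dots,t^rs_r)$. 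Since $\mathscr{B}^r_X\hookrightarrow\mathscr{A}^r_X$ is a closed immersion, $\mathscr{B}^r_X$ is a closed $\mathbb{G}_m$-stable subscheme of this affine space.

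Next, fix a $\kk$-point $s\in\mathscr{B}^r_X$ and consider the orbit map $\mathbb{G}_m\to\mathscr{A}^r_X$, $t\mapsto t\cdot s$. It extends to a morphism $\phi:\mathbb{A}^1\to\mathscr{A}^r_X$, $t\mapsto(ts_1,\dots,t^rs_r)$, because all weights are positive. We have $\phi(0)=0$ and $\phi(\mathbb{G}_m)\subset\mathscr{B}^r_X$. Since $\mathscr{B}^r_X$ is closed and $\mathbb{G}_m$ is dense in $\mathbb{A}^1$, the morphism $\phi$ factors through $\mathscr{B}^r_X$. Its image is connected, being the image of the connected scheme $\mathbb{A}^1$, and it contains both $s$ and $0$. (Alternatively, $0\in\mathscr{B}^r_X$ directly, since it corresponds to the cycle $r\cdot[0]$ in each fiber.)

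Finally, every $\kk$-point lies in the connected component of $0$. A finite-type $\kk$-scheme with $\kk$ algebraically closed has $\kk$-points dense in every connected component, so there is only one component. We also need that $\mathscr{B}^r_X$ is of finite type; this holds because it is a closed subscheme of a finite-dimensional affine space, $X$ being proper.

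The only point needing care is the compatibility of the scaling action with the identification of $\mathscr{B}^r_X$ as a closed subscheme of $\mathscr{A}^r_X$, i.e., that scaling really preserves $\mathscr{B}^r_X$. Since this reduces to the homogeneity $\sigma_i(t\omega)=t^i\sigma_i(\omega)$, I do not expect a genuine obstacle.
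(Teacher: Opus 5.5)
Your proof is correct and follows the paper's argument: the $\mathbb{G}_m$-scaling orbit of any point of the closed subscheme $\mathscr{B}^r_X\subset\mathscr{A}^r_X$ has the origin in its closure, so every point lies in the connected component of $0$. You also make explicit two things the paper leaves implicit: that the relevant action has weight $t^i$ on $s_i$, and that $\kk$-points meet every connected component.
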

\begin{proof}
Choose any point $s\in\mathscr{B}^r_X(\kk)$.
We view the $\mathbb{G}_m$-orbit of $s$ as a morphism
$s:\mathbb{G}_m\to\mathscr{B}^r_X$, $t\mapsto t\cdot s$.
Since $\mathscr{B}^r_X\subset\mathscr{A}^r_X$ is closed,
the limit of this orbit as $t\to0$ exists,
and is precisely the origin $0\in\mathscr{B}^r_X$.

Thus, the closure of every $\mathbb{G}_m$-orbit contains $0$,
and we conclude that $\mathscr{B}^r_X$ is connected.
\end{proof}

\subsection{Irreducible spectral data}
In this subsection we define a stratification of the spectral base
and recall the discriminant. We also make some preliminary observations on irreducible spectral data.

Each $\kk$-point $s\in\mathscr{B}^r_X$ is an $r$-tuple $(s_1,\dots,s_r)$,
with $s_i\in H^0(X,\mathrm{Sym}^i\Omega^1_X)$,
which we may view as a homogeneous polynomial $P(s)$
of degree $r$ in a formal variable $y$
\begin{align*}
    P(s):= y^r-s_1y^{r-1}+\dots+(-1)^rs_r.
\end{align*}
We can define morphisms of Chow schemes 
\begin{align*}
B^p\times B^q&\to B^r\\
([\omega_1,\dots,\omega_p],[\psi_1,\dots,\psi_q])&\mapsto[\omega_1,\dots,\omega_p,\psi_1,\dots,\psi_q]
\end{align*}
for all $p,q\in\mathbb{N}$ such that $p+q=r$.
Note that this morphism is $\gl_d$-invariant.
This yields a morphism
\begin{align*}
    m_{p,q}:\mathscr{B}^p_X\times\mathscr{B}^q_X \to
    \mathscr{B}^r_X,
\end{align*}
which sends an $S$-point on the left-hand side, corresponding
to $\gl_d$-invariant morphisms $f:\Omega_X^1\times S \to B^p$
and $g:\Omega^1_X \times S \to B^q$, where we view $\Omega^1_X$
as a $\gl_d$-torsor over $X$, to 
$f\times g:\Omega_X^1 \times S \to B^p\times B^q$.
On the level of $\kk$-points, $m_{p,q}$ is 
given by multiplication of characteristic polynomials. Note that we can analogously define a multiplication map $\mathscr{A}^p_X\times\mathscr{A}^q_X\to\mathscr{A}^r_X$ 
on the Hitchin base,
which we use in the proof of Lemma \ref{lemma-proper}.

We define $(p,q)\subset\mathscr{B}^r_X$
as the subset consisting of those $s$ such that $P(s)$
decomposes as a product a degree $p$ polynomial in $\mathscr{B}^p_X$
and a degree $q$ polynomial in $\mathscr{B}^q_X$.
We define $(r)\subset\mathscr{B}^r_X$ as the subset consisting
of irreducible polynomials, i.e., those that do not decompose.
Thus, the spectral base can be written as the union 
\begin{align}\label{strat-spectralbase}
    \mathscr{B}^r_X=\bigcup_{p+q=r;\; p\le q}(p,q)\cup(r).
\end{align}

It is straightforward to check that the set-theoretic 
image (i.e. the image of the $\kk$-points) of $m_{(p,q)}$ is precisely the subset $(p,q)$.
Using the same method as the proof of \cite[Lemma 5.6]{PW26},
we show that the multiplication maps are proper.

\begin{lemma}
\label{lemma-proper}
    The morphisms $m_{(p,q)}$ are proper for all 
    $p,q\in\mathbb{N}$ such that $p+q=r$.
\end{lemma}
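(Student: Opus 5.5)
The plan is to deduce properness of $m_{(p,q)}$ from the analogous statement on Hitchin bases, where everything is affine and finite. First I will establish the commutative square
\[
\begin{tikzcd}
\mathscr{B}^p_X\times\mathscr{B}^q_X \arrow{r}{m_{(p,q)}}\arrow[hook]{d} & \mathscr{B}^r_X\arrow[hook]{d}\\
\mathscr{A}^p_X\times\mathscr{A}^q_X \arrow{r}{\mu} & \mathscr{A}^r_X,
\end{tikzcd}
\]
where the vertical arrows are the closed immersions coming from (\ref{closed-embedding}) and $\mu$ is multiplication of characteristic polynomials. Commutativity holds on the level of the models $B^p\times B^q\to B^r$ and $A^p\times A^q\to A^r$, because the elementary symmetric functions of the concatenated tuple $[\omega,\psi]$ are the coefficients of the product $\prod(y-\omega_i)\prod(y-\psi_j)$. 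All maps here are $\gl_d$-equivariant, so the square descends to the quotient stacks and hence to the mapping spaces twisted by $\Omega^1_X$.

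The key step is to show that $\mu$ is proper, and in fact finite. Since a closed immersion followed by a proper map is proper, and $\mathscr{B}^r_X\hookrightarrow\mathscr{A}^r_X$ is separated, properness of $\mu$ gives properness of $\mu|_{\mathscr{B}^p_X\times\mathscr{B}^q_X}=\iota\circ m_{(p,q)}$, and therefore of $m_{(p,q)}$ by the cancellation property. To prove that $\mu$ is finite I will use the $\mathbb{G}_m$-actions. The source and the target are affine spaces, and $\mu$ is equivariant when $\mathbb{G}_m$ acts with weight $i$ on the degree-$i$ pieces. So $\mu$ is a homogeneous polynomial map with positive weights. By the graded Nakayama lemma, $\mu$ is finite as soon as $\mu^{-1}(0)=\{0\}$ set-theoretically.

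This reduces everything to the main obstacle: showing that if $P\cdot Q=y^r$ with $P,Q$ monic in $y$ and coefficients in the symmetric algebra, then $P=y^p$ and $Q=y^q$. I will check this pointwise. At a general point $x\in X$, and after restricting to a general line in the fibre, the coefficients become polynomials over $\kk$ in a single variable. Monic factorization in the polynomial ring $\mathrm{Sym}^\bullet(\Omega^1_{X,x})[y]$ is unique because this ring is a UFD, so $P=y^p$ and $Q=y^q$ at $x$. Hence the sections $s_i$ and $t_j$ vanish on a dense open subset, and therefore vanish identically.

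A second route is available if the graded argument needs reinforcing: use the valuative criterion instead. For this, lift a family over a DVR after a finite extension, noting that the roots of a monic polynomial are integral over its coefficients.
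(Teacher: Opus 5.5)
Your proof is correct, and it reaches properness by a different mechanism than the paper. Both arguments start from the same factorization of $m_{(p,q)}$: the closed immersion $\mathscr{B}^p_X\times\mathscr{B}^q_X\hookrightarrow\mathscr{A}^p_X\times\mathscr{A}^q_X$ followed by multiplication on Hitchin bases. The paper then proves multiplication is proper by \emph{compactifying}. It adjoins a degree-zero coordinate and uses ordinary (unweighted) scaling to form projective spaces $\mathscr{P}^p_X,\mathscr{P}^q_X,\mathscr{P}^r_X$. It extends multiplication to a bilinear morphism $\overline{\oplus}:\mathscr{P}^p_X\times\mathscr{P}^q_X\to\mathscr{P}^r_X$, which is well defined because a product of two nonzero polynomials is nonzero. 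Since $v_0=s_0t_0$, the preimage of the chart $\{v_0\neq0\}$ is exactly $\{s_0\neq0\}\times\{t_0\neq0\}$, so $\oplus$ is a base change of a morphism of projective varieties. You instead use the weighted $\mathbb{G}_m$-action, for which $\mu$ is equivariant with positive weights, and graded Nakayama to reduce to $\mu^{-1}(0)=\{0\}$, i.e.\ to the fact that $y^r$ has only the trivial monic factorization. The algebraic input is essentially the same in both: the coefficient ring $\bigoplus_i H^0(X,\mathrm{Sym}^i\Omega^1_X)$, and hence its polynomial ring in $y$, has no zero divisors, since it embeds into the symmetric algebra at the generic point. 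Your route gives the stronger conclusion that $m_{(p,q)}$ is \emph{finite}, not just proper. The paper's route avoids graded commutative algebra and needs only the no-zero-divisors fact and an elementary Cartesian square. Your ``general line in the fibre'' step is superfluous. With $A$ a domain, comparing the lowest nonzero $y$-degree terms of $P$ and $Q$ in $A[y]$ already forces $P=y^p$ and $Q=y^q$ at every point. The valuative-criterion backup is likewise unnecessary.
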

\begin{proof}
Any $\kk$-point $s=(s_1,\dots,s_p)$ of the Hitchin base
$\mathscr{A}^p_X$ can also be expressed as a homogeneous polynomial 
$y^p-s_1y^{p-1}+\dots+(-1)^ps_p$ and we can define multiplication maps
$\oplus_{(p,q)}:\mathscr{A}^p_X\times \mathscr{A}^q_X\to \mathscr{A}^r_X$.
Since $\mathscr{B}^p_X$ is a closed subscheme of $\mathscr{A}^p_X$, 
we have closed immersions
$\iota_{(p,q)}:\mathscr{B}^p_X\times\mathscr{B}^q_X
\to\mathscr{A}^p_X\times\mathscr{A}^q_X$,
and it is easy to check that $m_{(p,q)}=\oplus_{(p,q)}\circ\iota_{(p,q)}$.

We show that the map $\oplus_{(p,q)}$ is proper,
from which it immediately follows that $m_{(p,q)}$ is proper.

Let $\mathscr{P}^r_X$ denote the projective completion of $\mathscr{A}^r_X$
formed by adding an additional variable in degree zero.
More precisely, $\mathscr{P}^r_X:=
(\bigoplus_{i=0}^rH^0(X,\mathrm{Sym}^i\Omega^1_X)\setminus
\{\mathbf{0}\})\sslash \mathbb{G}_m$,
where $H^0(X,\mathrm{Sym}^0\Omega^1_X)=H^0(X,\mathcal{O}_X)$,
$\mathbf{0}:=
(0,\dots,0)\in\bigoplus_{i=0}^rH^0(X,\mathrm{Sym}^i\Omega^1_X)$ 
is the origin, and the $\mathbb{G}_m$-action is given by scaling, i.e.,
$\lambda\cdot(s_0,\dots,s_r)=(\lambda s_0,\dots,\lambda s_r)$,
$\lambda\in\mathbb{G}_m$. 
Thus, a point $s\in\mathscr{P}_X^r$ can be represented by 
$[s_0:\dots:s_r]$ in homogeneous coordinates
and $\mathscr{A}_X^r$ can be identified with the chart  $\{s_0\neq 0\}$. 

We define a map 
$\overline{\oplus}_{(p,q)}:
\mathscr{P}^p_X\times \mathscr{P}^q_X\to\mathscr{P}^r_X$ 
extending $\oplus_{(p,q)}$, given by
\[
    ([s_0:s_1:\dots:s_p],[t_0:t_1:\dots:t_q])\mapsto[v_0:v_1:\dots:v_r],
\]
where each $v_i$ is $(-1)^i$ times the degree $r-i$ coefficient of the polynomial 
\[
    (s_0y^p-s_1y^{p-1}+\dots+(-1)^ps_p)\cdot
    (t_0y^q-t_1y^{q-1}+\dots+(-1)^qt_q),
\]
for $0\le i\le p+q=r$. 

Note that $\overline{\oplus}_{(p,q)}$ is indeed well defined,
because the product of two such polynomials is identically zero
if and only if one of them is identically zero.
We recover $\oplus_{(p,q)}$ by restricting $\overline{\oplus}_{(p,q)}$
to the chart $\{s_0\neq0\}\times\{t_0\neq0\}$.

We have the following Cartesian diagram

\begin{center}
\begin{tikzcd}
       \mathscr{A}^p_X\times\mathscr{A}^q_X \ar[hookrightarrow]{r} \ar{d}{\oplus} & \mathscr{P}^p_X\times\mathscr{P}^q_X \ar{d}{\overline{\oplus}}\\
        \mathscr{A}^r_X \ar[hookrightarrow]{r} & \mathscr{P}^r_X
\end{tikzcd}
\end{center}
from which it follows that $\oplus_{(p,q)}:\mathscr{A}^p_X\times
\mathscr{A}^q_X\to\mathscr{A}^r_X$ is proper.
\end{proof}

In particular, the subsets $(p,q)$ are closed,
while $(r)$ is open in $\mathscr{B}^r_X$. 

\begin{definition}[Irreducible spectral datum]
    We call a $\kk$-point of $(r)$ an 
    \emph{irreducible spectral datum}.
\end{definition}

Let $0\neq s\in\mathscr{B}^r_X$ be a $\kk$-point.
Recall that the roots of the associated characteristic polynomial
$P(s)$ at any point $x\in X$ are precisely the elements 
$\{\omega_1,\dots,\omega_r\}$ of the
fiber $\pi_s^{-1}(x)\subset X_s$ of the spectral cover 
$\pi_s:X_s\to X$, counted with multiplicity. 

We observe that the irreducibility of $P(s)$ is closely related to the irreducibility of the spectral cover $X_s$.
The following argument is closely related to and 
inspired by \cite[Section 2]{HLM24}.

\begin{lemma}
    \label{lemma-generically-integral}
    Let $s\in\mathscr{B}^r_X$ be a closed point. 
    If $s\in(r)$, then the spectral cover 
    $X_s$ has exactly one irreducible
    component dominating $X$ which
    is in addition generically integral. 
    Furthermore, the morphism $\pi_s:X_s\to X$
    is generically \'etale.
\end{lemma}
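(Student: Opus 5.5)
The plan is to work over the generic point $\eta$ of $X$, where $K=\kk(X)$, and read off all three claims from the structure of the generic fiber of $\pi_s$. Since $\pi_s$ is finite, $X_s$ is a closed subscheme of the total space of $\Omega^1_X$ cut out by the equations $P(s)=0$. Every irreducible component dominating $X$ must have its generic point lying in the generic fiber $X_{s,\eta}=\pi_s^{-1}(\eta)$. That fiber is a finite $K$-scheme, the spectrum of an Artinian $K$-algebra. So the components of $X_s$ dominating $X$ correspond bijectively to the points of $X_{s,\eta}$, and it suffices to prove two things: $X_{s,\eta}$ has exactly one point, and $X_{s,\eta}$ is reduced (in fact a field, separable over $K$).

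To analyse $X_{s,\eta}$, I trivialize $\Omega^1_X$ near $\eta$, choosing local coordinates $e_1,\dots,e_d$ of $\Omega^1_{X,\eta}$. Over $\bar K$ the point $s$ gives an unordered tuple $[\omega_1,\dots,\omega_r]$ with $\omega_i\in\bar K^d$, and these are the geometric points of $X_{s,\eta}$. The Galois group $\mathrm{Gal}(\bar K/K)$ permutes the $\omega_i$. Grouping them into Galois orbits with multiplicities, each orbit $O$ with multiplicity $m$ determines a point $[O,\dots,O]$ of $\mathrm{Chow}^{m|O|}$ defined over $K$. I would then argue that this gives a factorization $P(s)=P(s')P(s'')$ over the generic point with $s',s''$ spectral data over $K$. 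The key extension step is that such a factorization over $K$ extends to all of $X$. The coefficients of each factor are polynomials in the $\omega_i$ and are integral over $\mathcal{O}_{X,x}$ at every point $x$, because the $\omega_i$ are roots of a monic equation with coefficients in $\mathcal{O}_X$. Since $X$ is smooth, hence normal, these rational sections of $\mathrm{Sym}^i\Omega^1_X$ are regular everywhere. The factors then define points of $\mathscr{B}^p_X\times\mathscr{B}^q_X$, because the map $X\to[B^r/\gl_d]$ factors through $B^p\times B^q$ on a dense open set, and $B^p\times B^q\to B^r$ is finite, so it factors everywhere by normality of $X$.

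Irreducibility of $s$ therefore forces the Galois action on $\{\omega_1,\dots,\omega_r\}$ to be transitive with all multiplicities equal to some $m$. If $m\geq2$, I get $s=m_{(p,q)}$-type decompositions of the shape $[O]^{m}$, which means $P(s)=Q^m$ and $s$ is reducible. Hence $m=1$ and the $\omega_i$ are pairwise distinct. Distinctness at $\eta$ means $s(\eta)$ lies in $B^{r,\circ}$, so the open set $X^\circ$ is nonempty. By \cite[Proposition 6.1]{CN20}, $\pi_s$ is étale over the dense open $X^\circ$, which gives the generically étale claim. Then $X_{s,\eta}$ is étale over $K$, hence reduced and a product of separable field extensions. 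Transitivity of the Galois action shows it is a single field. Hence exactly one dominating component exists, and it is generically integral: reduced at its generic point, and even integral over $X^\circ$.

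The main obstacle is the extension step, namely showing that a factorization of $P(s)$ at the generic point comes from $\kk$-points of $\mathscr{B}^p_X\times\mathscr{B}^q_X$, i.e. from global sections landing in the Chow scheme. I would handle it as sketched: first use integrality plus normality of the smooth $X$ (Hartogs/Krull) to extend the coefficients, and then use finiteness of $B^p\times B^q\to B^r$ together with normality of $X$ to lift the morphism to the product of Chow schemes.
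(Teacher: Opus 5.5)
Your proof is correct, but it takes a different route from the paper's.

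\textbf{The paper's route.} The paper argues globally with Higgs sheaves. It removes the images of pairwise intersections of the dominating components and splits $\pi_{s*}\mathcal{O}_{X_s}$ over the resulting open set into the pushforwards from the individual components. It then extends these to Higgs sheaves $\mathcal{F}_k$ on $X$ and uses multiplicativity of characteristic polynomials to get $P(s)=\prod_k P(t_k)$, so irreducibility forces a single dominating component. Generic reducedness comes from the kernel $\mathcal{K}$ of $\pi_{s*}\mathcal{O}_{X_s}\to\widehat{\pi}_{s*}\mathcal{O}_{\widehat{X}_s}$: if $\mathcal{K}$ had positive rank, its characteristic polynomial would be a proper factor of $P(s)$. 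Generic \'etaleness is then read off from characteristic zero.

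\textbf{Your route.} You work entirely at the generic point, using Galois orbits (with multiplicities) of the geometric roots, and the order of deduction is reversed. You first show that the roots are generically pairwise distinct, i.e.\ $D(s)\neq 0$; the paper only derives this afterwards, in Lemma~\ref{zerodisc}, from generic \'etaleness. You then get \'etaleness over $X^\circ$ from \cite[Proposition 6.1]{CN20}. Finally, transitivity of the Galois action gives a generic fiber that is a single point and reduced.

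\textbf{What each approach buys.} In your route, the step everything hinges on is proved explicitly: the factors are genuinely $\kk$-points of $\mathscr{B}^p_X$ and $\mathscr{B}^q_X$ on all of $X$, via normality of $X$. Either of your two mechanisms suffices on its own. The integrality argument is made precise by noting that for every linear form $\phi$ defined over $\mathcal{O}_{X,x}$, the scalars $\phi(\omega_i)$ are roots of a monic polynomial over $\mathcal{O}_{X,x}$. Alternatively, the finiteness of $B^p\times B^q\to B^r$ alone suffices. The paper, by contrast, asserts that $t_k$ and $P(\mathcal{K})$ lie in spectral bases with only a brief justification. For a possibly non-locally-free, possibly torsion sheaf like $\mathcal{K}$, this really needs an argument of your type or Lemma~\ref{factorization}. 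The paper's approach, in turn, stays in the Higgs-sheaf/BNR language used throughout. It also exhibits a sheaf-level decomposition of the spectral sheaf, which is closer in spirit to how spectral covers are used in the rest of the paper.
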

\begin{proof}
    As we are in characteristic $0$ a finite surjective morphism
    of integral schemes is generically \'etale
    and it suffices to show the first part of the lemma.
    
    Consider the irreducible components $X_{s,1},\dots,X_{s,n}$
    of the spectral cover $X_s\to X$
    of dimension $d=\dim X$.
    Let $W\subset X$ be the union of images of pairwise intersections
    $X_{s,i}\cap X_{s,j}$ of irreducible components for $1\le i<j\le n$.
    Set $U:= X\setminus W$, $U_s:=\pi_s^{-1}(U)$, 
    and note that $U$ is a dense open subset of $X$.

    Let $U_s=\bigcup_{k=1}^n U_{s,k}$ be a decomposition 
    into irreducible components,
    which is actually a disjoint union because
    $U_{s,i}\cap U_{s,j}=\emptyset$ for all $1\le i<j\le n$.
    Consider the coherent Higgs sheaf
    $\pi_{s*}\mathcal{O}_{X_s}\to\pi_{s*}\mathcal{O}_{X_s}\otimes\Omega^1_X$
    whose characteristic polynomial is $P(s)$.
    Restricted to $U$, we obtain a direct sum decomposition 
    
    \[
    (\pi_{s*}\mathcal{O}_{X_s})_{\mid U}
    \cong\bigoplus_{k=1}^n\mathcal{F}_{U,k}
    \]
    as Higgs sheaves on $U$, where $\mathcal{F}_{U,k}$
    is the direct image of $\mathcal{O}_{U_{s,k}}$ along $U_{s,k}\to U$.
    We obtain a factorization of $P(s)$ over $U$ as
    $P(s)_{\mid U}=\prod_{k=1}^nP_{U,k}$,
    where $P_{U,k}$ denotes the characteristic polynomial
    of $\mathcal{F}_{U,k}$. 
    
    Let $X_{s,k}$ denote the scheme-theoretic closure of $U_{s,k}$
    in $X_s$, and consider the natural morphisms $\pi_{s,k}:X_{s,k}\to X$.
    Via pushforward we obtain coherent Higgs sheaves
    $\mathcal{F}_k:=\pi_{s,k*}\mathcal{O}_{X_{s,k}}$ on $X$ which
    extend the Higgs sheaves $\mathcal{F}_{U,k}$, that is, 
    $\mathcal{F}_k|_U=\mathcal{F}_{U,k}$ for $1\le k\le n$.
    Let $t_k$ be the image via the Hitchin morphism of 
    $\mathcal{F}_k$ and note that $t_k\in\mathscr{B}^{l_k}_X$,
    where $l_k\in\mathbb{N}$ is the rank of $\mathcal{F}_k$.
    Indeed, locally the roots of the characteristic polynomial
    of each $\mathcal{F}_k$ are sections of $\Omega^1_X$.
    Associating to each point $x\in X$ the unordered tuple of these 
    roots defines a map $t_k:X\to\mathrm{Chow}^{l_k}(\Omega^1_X/X)$.

    Consider the characteristic polynomial $\prod_{k=1}^n P(t_k)$
    of $\bigoplus_{k=1}^n\mathcal{F}_k$.
    Note that we have $P(t_k)|_U=P_{U,k}$ for each $k$ and thus 
    \[
    \prod_{k=1}^n P(t_k)|_U=\prod_{k=1}^n P_{U,k}. 
    \]
    It follows that $P(s)$ and $\prod_{k=1}^n P(t_k)$ agree over $U$.
    Since $U\subset X$ is dense open,
    it follows that $P(s)=\prod_{k=1}^n P(t_k)$ holds on $X$.
    Since we assumed that $s$ is irreducible, we conclude $n=1$, that is,
    the spectral cover $X_s$ has exactly one irreducible component that dominates $X$.
    
    Let $\widehat{X}_s$ denote the reduced subscheme underlying $X_s$
    and consider the natural morphisms $\iota:\widehat{X}_s\to X_s$
    and $\widehat{\pi}_s:=\pi_s\circ\iota:\widehat{X}_s\to X$.
    There is a commutative diagram
    \begin{center}
    \begin{tikzcd}
        \mathcal{O}_{X_s}\ar{r}{\lambda}\ar{d}{} &
        \mathcal{O}_{X_s}\otimes\pi_s^*\Omega^1_{X_s}\ar{d}{}\\
        \iota_*\iota^*\mathcal{O}_{X_s}\ar{r}{\iota_*\iota^*\lambda} &
        \iota_*\iota^*(\mathcal{O}_{X_s}\otimes\pi_s^*\Omega^1_{X_s})
    \end{tikzcd}
    \end{center}
    of sheaves on $X_s$, where $\lambda$ denotes 
    the tautological section of $\pi_s^*\Omega^1_X$.
    Pushing forward by $\pi_s$,
    we obtain a surjective morphism 
    $\pi_{s,*}\mathcal{O}_{X_s}\to
    \widehat{\pi}_{s,*}\mathcal{O}_{\widehat{X}_s}$
    of Higgs sheaves on $X$.
    Note that $X_s$ is generically reduced if and only if
    $\pi_{s,*}\mathcal{O}_{X_s}$ and $\widehat{\pi}_{s,*}\mathcal{O}_{\widehat{X}_s}$ have the same
    rank. Denote by $\mathcal{K}$ the kernel 
    of $\pi_{s,*}\mathcal{O}_{X_s}\to
    \widehat{\pi}_{s,*}\mathcal{O}_{\widehat{X}_s}$
    and note that $\mathcal{K}$ is a Higgs subsheaf of
    $\pi_{s,*}\mathcal{O}_{X_s}$. 
    
    Let $P(\mathcal{K})$ (resp. $P(\widehat{\pi}_{s,*}(\mathcal{O}_{\widehat{X}_s}))$) denote the characteristic polynomials of $\mathcal{K}$ (resp. $\widehat{\pi}_{s,*}(\mathcal{O}_{\widehat{X}_s})$).
    If $\mathcal{K}$ had positive rank, we would obtain a factorization of
    characteristic polynomials 
    $P(s)=P(\mathcal{K})P(\widehat{\pi}_{s,*}(\mathcal{O}_{\widehat{X}_s}))$
    since the characteristic polynomial is multiplicative 
    in short exact sequences.
    Since $s$ is irreducible, we obtain a contradiction,
    that is, we obtain that $\mathcal{K}$ has rank $0$.
    Equivalently, $\pi_{s,*}\mathcal{O}_{X_s}$
    and $\widehat{\pi}_{s,*}\mathcal{O}_{\widehat{X}_s}$
    have the same rank and we conclude.
\end{proof}

For any $\kk$-point $\omega=[\omega_1,\dots,\omega_r]\in B^r$, we can define its \emph{discriminant} as the expression $D(\omega):=\prod_{i<j}(\omega_i-\omega_j)^2\in\mathrm{Sym}^{r(r-1)}\mathbb{A}^d$. Note that this is the same as the discriminant of the polynomial $\prod_{i=1}^r(y-\omega_i)$, where $y$ is a formal variable, hence the name. By \cite[Proposition 6.1]{CN20}, the morphism $B^{\bullet,r}\to B^r$ is \'etale precisely over the open subset $B\setminus\{\omega\;|\;D(\omega)=0\}$.

The assignment $\omega\mapsto D(\omega)$ defines a $\gl_d$-invariant morphism $B^r\to\mathrm{Sym}^{r(r-1)}{\mathbb{A}^d}$. Thus, we obtain a morphism of quotient stacks 
\[
\mathscr{B}^r=[B^r/\gl_d]\to[\mathrm{Sym}^{r(r-1)}{\mathbb{A}^d}/\gl_d]=:\mathscr{D}^r.
\]
This further induces a morphism of Hom-stacks
\[
D:\mathscr{H}om(X,\mathscr{B}^r)
\times_{\mathscr{H}om(X,\mathrm{B}\gl_d)}\mathrm{Spec}(\kk)\to \mathscr{H}om(X,\mathscr{D}^r)
\times_{\mathscr{H}om(X,\mathrm{B}\gl_d)}\mathrm{Spec}(\kk),
\]
where the morphism $\mathrm{Spec}(\kk)\to\mathscr{H}om(X,\mathrm{B}\gl_d)$ again corresponds to the cotangent bundle $\Omega^1_X$. The target can be identified with the affine space $H^0(X,\mathrm{Sym}^{r(r-1)}\Omega^1_X)$, so we have a morphism 
\[
D:\mathscr{B}^r_X\to H^0(X,\mathrm{Sym}^{r(r-1)}\Omega^1_X)
\] 
that is given on $\kk$-points by sending $s=(s_1,\dots,s_r)$ to the discriminant $D(s)$ of the polynomial $P(s)$. We call $D$ the \emph{discriminant map}.

Note that $D(s)$ is a homogeneous expression in the $s_i$. For example, when $r=2$, we have $s=(s_1,s_2)\in\mathscr{B}^2_X$
and $D(s)=s_1^2-4s_2\in H^0(X,\mathrm{Sym}^2\Omega^1_X)$. By definition, $D(s)$ vanishes at a point $x$
if and only if $P(s)$ has roots with multiplicity $>1$ at $x$,
i.e., if and only if $\omega_i(x)=\omega_j(x)$ for some $i\neq j$.

We observe that for any nonzero $s\in\mathscr{B}^r_X$,
the polynomial $P(s)$ can be decomposed 
as a product of irreducible polynomials,
such that each irreducible factor has nonzero discriminant.

\begin{lemma}\label{zerodisc}
    Let $0\neq s\in\mathscr{B}^r_X$ be a $\kk$-point.
    Then $s$ can be written as a product $s=\prod_{k=1}^n t_k^{m_k}$ 
    of irreducible factors
    $t_k\in\mathscr{B}^{l_k}_X(\kk)$, $m_k\in\mathbb{N}$,
    each of which has nonzero discriminant.
\end{lemma}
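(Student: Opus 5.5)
Write $P(s)$ as a product of irreducible factors by induction on $r$, then group equal factors into powers, and finally show each irreducible factor has nonzero discriminant.

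\textbf{Existence of a factorization.} If $s\in(r)$ we take $n=1$, $t_1=s$, $m_1=1$. Otherwise $s\in(p,q)$ for some $p+q=r$ with $p,q\ge1$. By the definition of the stratification \eqref{strat-spectralbase}, $s=m_{p,q}(s',s'')$ with $s'\in\mathscr{B}^p_X$ and $s''\in\mathscr{B}^q_X$. Apply induction to $s'$ and $s''$. The induction should be run over all points, including $0$, since $P(0)=y^r$ factors as $y\cdots y$ and the degree-one datum $0\in\mathscr{B}^1_X$ is irreducible. Since degrees strictly drop, the process terminates in a product of irreducible spectral data. We then collect identical factors, i.e. equal points of $\mathscr{B}^{l}_X$, into powers $t_k^{m_k}$ with the $t_k$ pairwise distinct.

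\textbf{Nonzero discriminant.} It remains to show that an irreducible $t\in(l)$ has $D(t)\neq0$. For $l=1$, $D(t)=1$ (the empty product), so assume $l\ge2$. By Lemma \ref{lemma-generically-integral}, $\pi_t:X_t\to X$ is generically étale. Hence there is a dense open $V\subset X$ over which the fiber of $\pi_t$ consists of $l$ distinct points, counted with multiplicity. At any $x\in V$ the roots $\omega_1(x),\dots,\omega_l(x)$ of $P(t)$ are therefore pairwise distinct, so $D(t)(x)=\prod_{i<j}(\omega_i(x)-\omega_j(x))^2\neq0$. Thus $D(t)$ is a nonzero section of $\mathrm{Sym}^{l(l-1)}\Omega^1_X$.

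\textbf{Main obstacle.} The delicate point is the step from ``generically étale'' to ``the $l$ roots are distinct at a general point.'' A priori, generic étaleness of $\pi_t$ only shows that the reduced fiber is étale. We need the scheme-theoretic fiber of $X_t\to X$ over a general point to have length exactly $l$, i.e. to be reduced. The plan is to use that $\pi_{t*}\mathcal{O}_{X_t}$ has generic rank $l$ (its characteristic polynomial is $P(t)$ of degree $l$), together with the generic integrality from Lemma \ref{lemma-generically-integral}. Over a dense open set the unique dominating component is integral, and the non-dominating components lie over a proper closed subset. The generic fiber is then a reduced étale scheme of length $l$, so $X_t\to X$ is étale over that open set. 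By \cite[Proposition 6.1]{CN20}, étaleness there is equivalent to nonvanishing of the discriminant, which gives $D(t)\neq0$.
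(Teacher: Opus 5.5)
Your proposal is correct and follows the paper's proof: factor $s$ into irreducibles, use Lemma~\ref{lemma-generically-integral} to see that each $\pi_{t_k}$ is \'etale over a dense open subset, and conclude that the roots of $P(t_k)$ are generically pairwise distinct, so $D(t_k)\neq 0$. Your induction for the existence of the factorization only makes explicit what the paper takes for granted, and your ``main obstacle'' paragraph is already covered by Lemma~\ref{lemma-generically-integral}: that lemma states, and proves via the rank comparison with the reduced subscheme, that $X_t\to X$ itself is generically \'etale, not just its reduction.
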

\begin{proof}
    Let $s=\prod_{k=1}^n t_k^{m_k}$
    be a decomposition of $s$ into irreducible factors.
    Then $t_k\in(l_k)\subset\mathscr{B}^{l_k}_X$
    for some $l_k\in\mathbb{N}$ and $m_k\in\mathbb{N}$
    denotes the multiplicity of $t_k$ in $s$ for $1\le k\le n$.

    We know from Lemma \ref{lemma-generically-integral} 
    that the spectral cover $\pi_{t_k}:X_{t_k}\to X$
    corresponding to $t_k$ is generically \'etale of degree $l_k$.
    Thus, there is a dense open subset $X^\circ_k\subset X$
    over which $\pi_{t_k}$ is \'etale. 
    
    Recall that the fibers of $\pi_{t_k}$ consist of roots of $P(t_k)$,
    which are all pairwise distinct over $X^\circ_k$.
    This means that $D(t_k)$ is nonzero over $X^\circ_k$
    and is thus a nonzero section of 
    $\mathrm{Sym}^{l_k(l_k-1)}\Omega^1_X$ for $1\le k\le n$.
\end{proof}

\begin{remark}\label{red-step}
    We can draw the following conclusions from Lemma \ref{zerodisc}.
    \begin{itemize}
    \item If $s\in\mathscr{B}^r_X$ is an irreducible 
    $\kk$-point (i.e. if $s\in(r)$),
    then $D(s)\neq 0$.
    \item In order to prove Conjecture \ref{conjecture},
    it suffices to prove that for every point
    $s\in(r)\subset\mathscr{B}^r_X$, the fiber $(h^r_X)^{-1}(s)$
    is non-empty.
    \end{itemize}
\end{remark}

\section{Normalized spectral covers}
\label{sec-normalized-spectral-covers}
In this section we modify the construction of
a spectral cover of an irreducible spectral datum
to obtain a normal spectral cover.
We also observe that the Hitchin morphism
from the stack of reflexive Higgs sheaves
factors via the spectral base and that this morphism is onto.

Let $X$ be a smooth proper variety 
and let $r\in\mathbb{N}$ be fixed.
Let $s\in\mathscr{B}^r_X$ be a closed point.
In view of Remark \ref{red-step}, we assume that $s\in(r)$.
Henceforth, we slightly abuse notation
by using $s$ and $P(s)$ interchangeably. 

Let $\widetilde{X}_s$ denote the reduced subscheme underlying the unique irreducible component of $X_s$ that is mapped onto $X$ via $\pi_s$. Note that $\widetilde{X}_s$ is isomorphic to $X_s$ over the dense open subset $X^\circ\subset X$ where $\pi_s$ is \'etale, and therefore $\widetilde{X}_s$ is integral and generically \'etale over $X$ by Lemma \ref{lemma-generically-integral}. 

Recall from the last paragraph of Section \ref{sec-spectral covers} that, in general, $\widetilde{X}_s$ may not be normal or Cohen-Macaulay.
Let $\nu:X'_s\to \widetilde{X}_s$ denote the normalization map.
We call $X'_s$ the \emph{normalized spectral cover} corresponding to $s$. 

We have the following commutative triangle
\[\begin{tikzcd}
    X'_s\arrow{r}{\pi'_s}\arrow{d}[swap]{\nu} & X\\
    \widetilde{X}_s\arrow{ur}[swap]{\pi_s}
\end{tikzcd}\]
where $\pi'_s=\pi_s\circ\nu$ is finite and surjective. 

Let $Z\subset X$ and $Z'\subset X$
denote the loci where the morphisms $\pi_s$
and $\pi'_s$ are branched, respectively, and note that $Z=X\setminus X^\circ$.
It follows from the construction of the spectral cover
that the morphism $\pi_s$ is ramified precisely over those points $x\in X$
where $P(s)$ has roots with multiplicity $>1$ at $x$ (\cite[Proposition 6.1]{CN20}).
In other words, we have the following description.
\begin{align*}
    Z=\{x\in X\;|\;D(s)(x)=0\}\subset X.
\end{align*}

 Next, we observe that passing from $\widetilde{X}_s$
 to its normalization $X'_s$ does not add any additional branching.

\begin{lemma}\label{branchlocus}
    With notation as above, we have $Z'\subset Z$.
\end{lemma}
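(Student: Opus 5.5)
We will show that $\pi'_s$ is étale over $X^\circ=X\setminus Z$; then every branch point of $\pi'_s$ lies in $Z$, which is the claim $Z'\subset Z$. The argument reduces to showing that the normalization $\nu$ is an isomorphism over $X^\circ$.

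\emph{Step 1: $\widetilde{X}_s$ is smooth over $X^\circ$.} By \cite[Proposition 6.1]{CN20}, $\pi_s$ is étale over $X^\circ$. So $\pi_s^{-1}(X^\circ)$ is étale over the smooth variety $X^\circ$, hence smooth, and in particular a disjoint union of smooth, hence normal, connected components. As noted before the statement, $\widetilde{X}_s$ coincides with $X_s$ over $X^\circ$. Here one should check that no other irreducible component of $X_s$ meets $\pi_s^{-1}(X^\circ)$. This holds because an étale scheme over $X^\circ$ has all its irreducible components of dimension $d$ and dominating $X^\circ$, and Lemma \ref{lemma-generically-integral} says there is only one dominating component, which is generically reduced. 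Hence $\widetilde{X}_s\times_X X^\circ=X_s\times_X X^\circ$ is smooth, and in particular normal.

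\emph{Step 2: $\nu$ is an isomorphism over $X^\circ$.} Normalization is compatible with restriction to open subsets, and it is an isomorphism over the normal locus of $\widetilde{X}_s$. Step 1 shows that $\pi_s^{-1}(X^\circ)$ lies in this normal locus. Therefore $\nu^{-1}(\pi_s^{-1}(X^\circ))\to\pi_s^{-1}(X^\circ)$ is an isomorphism.

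\emph{Step 3: conclusion.} It follows that $\pi'_s|_{\pi_s'^{-1}(X^\circ)}$ is identified with $\pi_s|_{\pi_s^{-1}(X^\circ)}$, which is étale. Hence $\pi'_s$ is unramified over every point of $X^\circ$, so $Z'\subset X\setminus X^\circ=Z$.

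The only delicate point is the bookkeeping in Step 1, namely that replacing $X_s$ by the reduced dominating component $\widetilde{X}_s$ changes nothing over the étale locus. If one prefers to avoid it, one can argue directly: $\widetilde{X}_s\times_X X^\circ$ is a closed subscheme of the étale $X^\circ$-scheme $X_s\times_X X^\circ$, and it is a union of connected components because it has full dimension and is reduced. It is therefore itself étale over $X^\circ$, and Steps 2 and 3 apply unchanged.
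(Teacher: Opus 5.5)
Your proof is correct and takes the same route as the paper. The paper observes that the étale locus $U$ of $\widetilde{X}_s\to X$ lies in the étale locus $U'$ of $X'_s\to X$ and takes complements; you supply the justification it calls ``clear'', namely that $\widetilde{X}_s$ is smooth (hence normal) over $X^\circ$, so $\nu$ is an isomorphism there.
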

\begin{proof}
Let $U$ and $U'$ denote the loci in $X$ where the maps $\pi_s:\widetilde{X}_s\to X$
and $\pi'_s:X'_s\to X$ respectively, are \'etale.
Then it is clear that $U\subset U'$.
Since $Z$ and $Z'$ are complements of $U$ and $U'$ respectively,
the claim follows.
\end{proof}

Since $X'_s$ is normal and $X$ is smooth,
it follows from the purity of branch locus
(\cite[\href{https://stacks.math.columbia.edu/tag/0BMB}{Tag 0BMB}]{sp})
that either $Z'$ has pure codimension one in $X$, or $Z'$ is empty.  

We want to determine sufficient conditions such that,
for a general $\kk$-point $s$ in $(r)\subset\mathscr{B}^r_X$,
the normalized spectral cover $\pi'_s:X'_s\to X$ is \'etale.
We begin with the following construction.

Since $\widetilde{X}_s$ is a closed subscheme of $\Omega^1_X$,
the vector bundle $\pi_s^*\Omega^1_X$ on $\widetilde{X}_s$
has a tautological global section $\lambda\in H^0(\widetilde{X}_s,\pi_s^*\Omega^1_X)$.
Multiplying by $\lambda$ gives a natural 
$\pi_s^*\Omega^1_X$-twisted bundle on $\widetilde{X}_s$
\begin{align*}
    \mathcal{O}_{\widetilde{X}_s}\xrightarrow{\times\lambda}
    \mathcal{O}_{\widetilde{X}_s}\otimes\pi_s^*\Omega^1_X.
\end{align*}
Pushing forward along $\pi_s$, we obtain a Higgs sheaf on $X$ by \cite[Lemma 6.8]{simp2}
\begin{align*}
    \theta:\pi_{s*}\mathcal{O}_{\widetilde{X}_s}
    \to\pi_{s*}(\mathcal{O}_{\widetilde{X}_s}\otimes\pi_s^*\Omega^1_X)\cong
    \pi_{s*}\mathcal{O}_{\widetilde{X}_s}\otimes\Omega^1_X.
\end{align*}

By the generalization of the BNR correspondence (\cite[Proposition 3.6]{BNR}) to higher dimensional varieties (\cite[Proposition 6.3]{CN20}), the characteristic polynomial of this Higgs sheaf is precisely $s$.

We now look at the vector bundle 
$\nu^*\pi_s^*\Omega^1_X={\pi'_s}^*\Omega^1_X$ on $X'_s$,
and the section $\nu^*\lambda\in H^0(X'_s,{\pi'_s}^*\Omega^1_X)$.
As before, multiplying by $\nu^*\lambda$ gives a natural
${\pi'_s}^*\Omega^1_X$-twisted bundle on $X'_s$ 
\begin{align*}
    \mathcal{O}_{X'_s}\xrightarrow{\times\nu^*\lambda}
    \mathcal{O}_{X'_s}\otimes{\pi'_s}^*\Omega^1_X.
\end{align*}
Pushing forward along $\pi'_s$,
we obtain an $\Omega^1_X$-twisted sheaf on $X$
\begin{align*}
    \theta':\pi'_{s*}\mathcal{O}_{X'_s}\to\pi'_{s*}(\mathcal{O}_{X'_s}\otimes{\pi'_s}^*\Omega^1_X)\cong\pi'_{s*}\mathcal{O}_{X'_s}\otimes\Omega^1_X.
\end{align*}

\begin{lemma}\label{higgssheaf}
    The pair $(\pi'_{s*}\mathcal{O}_{X'_s},\theta')$
    is a Higgs sheaf on $X$ with spectral datum $s$.
\end{lemma}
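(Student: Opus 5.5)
We need to show two things: that $\theta'$ is integrable, $\theta'\wedge\theta'=0$, and that the characteristic polynomial of $\theta'$ equals $P(s)$. Both checks reduce to the generic point of $X$, or to the dense open set $X^\circ$ over which $\nu$ is an isomorphism, together with torsion-freeness of $\pi'_{s*}\mathcal{O}_{X'_s}$.

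First, the basic properties of the sheaf. Since $X'_s$ is integral and $\pi'_s$ is finite and surjective, $\pi'_{s*}\mathcal{O}_{X'_s}$ is a coherent sheaf on $X$. It is torsion-free, because a local section annihilated by a nonzero function $f$ on $X$ would be killed by $\pi'^*_s f\neq0$ in the domain $\mathcal{O}_{X'_s}$. Its rank is $r$, since $\pi'_s$ is generically étale of degree $r$: over $X^\circ$ the maps $\nu$ and $\widetilde{X}_s\to X_s$ are isomorphisms by Lemma \ref{lemma-generically-integral}.

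Next, integrability. The composition $\theta'\wedge\theta'$ is a morphism
\[
\pi'_{s*}\mathcal{O}_{X'_s}\to\pi'_{s*}\mathcal{O}_{X'_s}\otimes\Omega^2_X .
\]
It is the pushforward of multiplication by $\nu^*\lambda\wedge\nu^*\lambda$ on $X'_s$, and this vanishes identically because $\lambda$ is a section of a vector bundle, so $\lambda\wedge\lambda=0$ in $\pi'^*_s\Omega^2_X$. Alternatively, one can argue exactly as for $\theta$ via \cite[Lemma 6.8]{simp2}: the action of $\Omega^1_X{}^\vee$ factors through the commutative algebra $\mathcal{O}_{X'_s}$, on which the symmetric algebra acts through $\nu^*\lambda$. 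Either way, commutativity of $\mathcal{O}_{X'_s}$ gives $\theta'\wedge\theta'=0$.

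Finally, the characteristic polynomial. The characteristic polynomial of $\theta'$ is a section $t$ of $\mathscr{A}^r_X$. It is computable at the generic point, since its coefficients are sections of the locally free sheaves $\mathrm{Sym}^i\Omega^1_X$, which are determined on a dense open set. Over $X^\circ$, the map $\nu$ restricts to an isomorphism
\[
\nu:\pi'^{-1}_s(X^\circ)\xrightarrow{\ \sim\ }\pi_s^{-1}(X^\circ),
\]
and it identifies $(\pi'_{s*}\mathcal{O}_{X'_s},\theta')|_{X^\circ}$ with $(\pi_{s*}\mathcal{O}_{\widetilde{X}_s},\theta)|_{X^\circ}$. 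By \cite[Proposition 6.3]{CN20}, the latter has characteristic polynomial $s$. Hence $t$ and $s$ agree on $X^\circ$, and therefore on all of $X$. In particular $t\in\mathscr{B}^r_X$ with $t=s$, so the spectral datum is $s$. One could equally use the natural inclusion $\pi_{s*}\mathcal{O}_{\widetilde{X}_s}\hookrightarrow\pi'_{s*}\mathcal{O}_{X'_s}$, which is an isomorphism generically and is compatible with the Higgs fields.

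The only delicate point is making sure the characteristic polynomial of a torsion-free but possibly non-locally-free sheaf is well-defined and determined generically. I would handle this by defining it on the big open set where the sheaf is locally free, whose complement has codimension $\ge2$ by torsion-freeness on smooth $X$. The coefficients there are sections of the locally free sheaves $\mathrm{Sym}^i\Omega^1_X$, so they extend uniquely to $X$ by Hartogs, and comparing them on the dense open $X^\circ$ identifies them with $s$.
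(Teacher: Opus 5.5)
Your proposal is correct. The spectral-datum part matches the paper's proof. Both identify $(\pi'_{s*}\mathcal{O}_{X'_s},\theta')$ with $(\pi_{s*}\mathcal{O}_{\widetilde{X}_s},\theta)$ over the dense open set where $\nu$ is an isomorphism, invoke \cite[Proposition 6.3]{CN20} there, and extend the equality of the coefficients in $H^0(X,\mathrm{Sym}^i\Omega^1_X)$ by density. Your remark on defining the characteristic polynomial of a merely torsion-free sheaf on its locally free locus and extending it by Hartogs fills in a point the paper leaves implicit.

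Where you genuinely differ is integrability. The paper argues by density again: it deduces $(\theta'\wedge\theta')|_U=0$ from the integrability of $\theta|_U$ and concludes $\theta'\wedge\theta'=0$ on all of $X$. That step tacitly uses that $\pi'_{s*}\mathcal{O}_{X'_s}\otimes\Omega^2_X$ is torsion-free. You instead observe directly that $\theta'\wedge\theta'$ is the pushforward of multiplication by $\nu^*\lambda\wedge\nu^*\lambda=0$. Equivalently, the local components of $\theta'$ are multiplication operators in the commutative ring $\mathcal{O}_{X'_s}$, so they commute. Your argument is more self-contained: it works for any finite cover carrying a section of the pulled-back cotangent bundle, with no comparison to $\widetilde{X}_s$ needed. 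The paper's argument is more economical, handling both claims with the single identification over $U$.
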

\begin{proof}
    Recall that we have assumed $s\in(r)$ and that $\widetilde{X}_s$ is reduced.
    The morphisms $\pi_s$ and $\pi'_s$ are both \'etale over 
    the dense open subset $U=X\setminus Z$ of $X$,
    and clearly $\pi_s^{-1}(U)\cong{\pi_s'}^{-1}(U)$.

    In particular, we have
    $(\pi'_{s*}\mathcal{O}_{X'_s}|_U,\theta'|_U)\cong
    (\pi_{s*}\mathcal{O}_{\widetilde{X}_s}|_U,\theta|_U)$
    as $\Omega^1_X|_U$-twisted bundles.
    Since $(\pi_{s*}\mathcal{O}_{\widetilde{X}_s}|_U,\theta|_U)$ is a Higgs bundle over $U$,
    we have the vanishing
    $(\theta\wedge\theta)|_U=\theta|_U\wedge\theta|_U=0$,
    from which it follows that $(\theta'\wedge\theta')|_U=0$.
    Since $\theta'\wedge\theta'$ vanishes over the dense open subset $U$, we conclude that $\theta'\wedge\theta'=0$ on all of $X$.

    Let $s'=(s'_1,...,s'_r)$ be the spectral datum associated 
    with $(\pi'_{s*}\mathcal{O}_{X'_s},\theta')$. Since 
    $(\pi_{s*}\mathcal{O}_{\widetilde{X}_s}|_U,\theta|_U)\cong
    (\pi'_{s*}\mathcal{O}_{X'_s}|_U,\theta'|_U)$
    as Higgs bundles on $U$, the characteristic polynomials of $\theta|_U$ and $\theta'|_U$
    agree. Therefore, we have
    $s_i|_U=s'_i|_U$ for all $1\le i\le r$. 

    Again, $U\subset X$ being a dense open subset,
    $s_i|_U=s'_i|_U$ implies that $s_i=s'_i$ 
    for all $1\le i\le r$, i.e., $s'=s$.
\end{proof}

\begin{remark}\label{reflexive}
    Since $X'_s$ and $X$ are both normal and
    $\pi'_s$ is a finite surjective morphism, 
    $\pi'_{s*}\mathcal{O}_{X'_s}$ is in fact a
    \emph{reflexive} Higgs sheaf on $X$  
    (\cite[Corollary 1.7]{reflexive-hartshorne}). 
\end{remark}

This recovers a result of He, Liu, and Mok 
(see \cite[Proposition 3.5]{HLM24}),
which says that for every $\kk$-point $s\in\mathscr{B}^r_X$,
there is a reflexive Higgs sheaf whose associated 
spectral datum is precisely $s$.

Let $\mathscr{M}^{r,\mathrm{refl}}_X$ denote the stack 
of reflexive Higgs sheaves of rank $r$ over $X$ which is an
open substack of the stack $\mathscr{M}^{r,\mathrm{tf}}_X$ 
of torsion-free Higgs sheaves of rank $r$ over $X$.
We denote the Hitchin morphisms by
$h^{r,\mathrm{tf}}_X:\mathscr{M}^{r,\mathrm{tf}}_X\to\mathscr{A}^r_X$ and $h^{r,\mathrm{refl}}_X:\mathscr{M}^{r,\mathrm{refl}}_X\to\mathscr{A}^r_X$.

\begin{lemma}\label{factorization}
    The morphism $h^{r,\mathrm{tf}}_X$
    factors through the spectral base $\mathscr{B}^r_X$.
\end{lemma}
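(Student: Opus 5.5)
The approach is to reduce to the case of Higgs bundles over a big open subset and then extend using a closedness/density argument.

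Let $(\mathcal{E},\theta)$ be a torsion-free Higgs sheaf of rank $r$ on $X$. Since $X$ is smooth, hence normal, and $\mathcal{E}$ is torsion-free, there is an open subset $V\subset X$ whose complement has codimension at least $2$ and over which $\mathcal{E}$ is locally free.

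\textbf{Step 1 (over $V$).} Over $V$ we use the theorem of Chen and Ng\^o \cite{CN20}: the Hitchin morphism from Higgs bundles factors through the spectral base. Concretely, the characteristic polynomial $h(\mathcal{E},\theta)|_V=(s_1,\dots,s_r)|_V$ defines a morphism $V\to[A^r/\gl_d]$, twisted by $\Omega^1_X$, that factors through the closed substack $[B^r/\gl_d]$. Locally on $V$ the commuting family of endomorphisms given by the components of $\theta$ has characteristic polynomial lying in $\mathrm{Chow}^r$. This is \cite[Theorem 5.1]{CN20}, or it follows from the closed immersion (\ref{closed-embedding}) applied to commuting matrices.

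\textbf{Step 2 (extension to $X$).} The characteristic coefficients are sections $s_i\in H^0(X,\mathrm{Sym}^i\Omega^1_X)$. They are defined on all of $X$ either by Hartogs extension from $V$, or directly, since $\mathcal{E}$ is generically free and the coefficients are regular on $V$ and hence extend. We must show that the induced section $X\to\mathrm{Chow}^r(\Omega^1_X/X)\subset\mathbb{V}(\oplus\Sym^i\Omega^1_X)$ of the total space of $\mathscr{A}^r$ lands in the closed subscheme $\mathrm{Chow}^r(\Omega^1_X/X)$. Its restriction to $V$ lands there by Step 1. The preimage of a closed subscheme is closed, and $V$ is dense in the reduced scheme $X$, so the whole section factors scheme-theoretically through the closed subscheme.

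\textbf{Step 3 (families).} To obtain a factorization of morphisms of stacks rather than only on $\kk$-points, we repeat Steps 1 and 2 for a family $(\mathcal{E}_S,\theta_S)$ over $X\times S$ that is flat over $S$ with torsion-free fibers. The locally free locus $V_S\subset X\times S$ is open and fiberwise dense, with complement of fiberwise codimension $\ge 2$. The only point to check is that a section of $\mathcal{O}$ on $X\times S$ vanishing on $V_S$ vanishes identically. This holds because $X\times S\to S$ is flat with integral fibers and $V_S$ is fiberwise dense, so $V_S$ is schematically dense.

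The main obstacle is this schematic density over a non-reduced base $S$. It follows from the standard result that a fiberwise-dense open subset of a flat family with reduced (here smooth) fibers is schematically dense \cite[Tag 0CN5-type]{sp}. If this proves delicate, it suffices to define the factorization on reduced test schemes, which is enough for the set-theoretic statements used later.
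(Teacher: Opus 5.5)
Your proof is correct, but it is organized differently from the paper's.

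\textbf{The paper's route.} The paper fixes a single big open $U\subset X$ with $\mathcal{E}|_{U\times S}$ locally free, obtained from openness of the locally free locus and quasi-compactness of $S$. It applies the Chen--Ng\^o factorization to this family of Higgs bundles on $U$. It then quotes the restriction isomorphism $\mathscr{B}^r_X\xrightarrow{\sim}\mathscr{B}^r_U$ of \cite[Lemma 5.2]{SS24}.

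\textbf{Your route.} You work directly with the locally free locus $V_S\subset X\times S$, which need not be a product, and apply the (local) factorization there. You conclude because the preimage of the closed subscheme $\mathrm{Chow}^r(\Omega^1_X/X)\times S$ is a closed subscheme of $X\times S$ containing the schematically dense open $V_S$. In effect you reprove, in relative form, the part of the Song--Sun isomorphism that is needed.

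\textbf{Comparison.} The paper's route is shorter, but yours is more robust. Producing $V(s)$ with $U(s)\times V(s)$ inside the locally free locus is a tube-lemma step, which would need properness of $U(s)$. It fails when the non-locally-free locus moves in the family. For example, take the universal ideal sheaf $\mathcal{I}_\Delta$ of points on a surface $X$, parametrized by $S=X$: no nonempty open $U\times V$ avoids the diagonal. Your argument handles arbitrary families.

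\textbf{Two small points.}
\begin{enumerate}
\item The schematic density you need follows directly from $\mathrm{Ass}(\mathcal{O}_{X\times S})=\bigcup_{s\in\mathrm{Ass}(S)}\mathrm{Ass}(\mathcal{O}_{X_{\kappa(s)}})$ for the flat projection to $S$. The fibres are integral, and their generic points lie in $V_S$ by the fibrewise flatness criterion. So you do not need the vague Stacks citation. The fallback to reduced test schemes is also unnecessary, and it would only factor the reduced stack.
\item Over a non-reduced $S$ you genuinely need the scheme-theoretic factorization of the characteristic polynomial map from the commuting scheme through $\mathrm{Chow}^r$ given in \cite{CN20}. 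Your parenthetical alternative, the closed immersion $B^r\hookrightarrow A^r$ applied to commuting matrices, only gives this pointwise, i.e.\ over reduced bases.
\end{enumerate}
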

\begin{proof}
    Let $S$ be a $\kk$-scheme of finite type
    and consider an $S$-point
    $(\mathcal{E},\varphi)\in\mathscr{M}^{r,\mathrm{tf}}_X(S)$,
    that is, $(\mathcal{E},\varphi)$ is a family of reflexive Higgs
    sheaves on $X\times S$ flat over $S$.

    Fix a closed point $s\in S$ and consider the 
    torsion-free sheaf $\mathcal{E}_s$ as the base change
    of $\mathcal{E}$ along $X\xrightarrow{\id \times s}X\times S$.
    Then $\mathcal{E}_s$ is a vector bundle over 
    a big open subset $U(s)\subset X$.
    Since being a family of vector bundles is an open condition
    we find an open neighborhood $V(s)\subset S$ of $s$
    such that $\mathcal{E}_{\mid U(s)\times V(s)}$ is a family
    of vector bundles flat over $V(s)$.

    Since $S$ is quasi-compact, there exist
    finitely many closed points $s_1,\dots, s_n$ such that $V(s_1),\dots,V(s_n)$
    cover $S$. In particular, setting $U:=\bigcap_{i=1}^n U(s_i)$
    yields a big open subset of $X$ such
    that $\mathcal{E}_{\mid U\times S}$ is a family of vector bundles
    on $U$ flat over $S$.

    Then $h_{U}^r(\mathcal{E}_{\mid U\times S})$ 
    lies in $\mathscr{B}^r_U(S)$.
    Since $\mathscr{B}^r_X\to \mathscr{B}^r_U$
    is an isomorphism by \cite[Lemma 5.2]{SS24}
    we conclude.
\end{proof}

Combining Remark \ref{reflexive} and Lemmas \ref{higgssheaf} 
and \ref{factorization},
we arrive at the following weaker version
of the statement of Conjecture \ref{conjecture}

\begin{proposition}\label{weakprop}
    The morphism $h^{r,\mathrm{refl}}_X$ is surjective 
    onto the spectral base $\mathscr{B}^r_X$.
\end{proposition}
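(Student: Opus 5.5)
The statement has two halves, which I would treat separately. First, $h^{r,\mathrm{refl}}_X$ lands in $\mathscr{B}^r_X$: by Lemma \ref{factorization} the morphism $h^{r,\mathrm{tf}}_X$ factors through $\mathscr{B}^r_X$. Since $\mathscr{M}^{r,\mathrm{refl}}_X$ is an open substack of $\mathscr{M}^{r,\mathrm{tf}}_X$ and $h^{r,\mathrm{refl}}_X$ is the restriction of $h^{r,\mathrm{tf}}_X$, the same factorization holds for $h^{r,\mathrm{refl}}_X$. So it only remains to show surjectivity on $\kk$-points, i.e. that every closed point $s\in\mathscr{B}^r_X$ is hit.

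For $s=0$, I would take $\mathcal{O}_X^{\oplus r}$ with zero Higgs field. For $0\neq s$, I would use Lemma \ref{zerodisc} to factor $s=\prod_{k=1}^n t_k^{m_k}$ with each $t_k\in(l_k)\subset\mathscr{B}^{l_k}_X$ irreducible. For each irreducible $t_k$, Lemma \ref{higgssheaf} supplies the Higgs sheaf $(\pi'_{t_k*}\mathcal{O}_{X'_{t_k}},\theta'_k)$ with spectral datum $t_k$. By Remark \ref{reflexive} this sheaf is reflexive, and its rank is $l_k$ because $\pi'_{t_k}$ is generically \'etale of degree $l_k$ (Lemma \ref{lemma-generically-integral}).

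Next I would form the direct sum
\[
(\mathcal{E},\theta):=\bigoplus_{k=1}^n\bigl(\pi'_{t_k*}\mathcal{O}_{X'_{t_k}},\theta'_k\bigr)^{\oplus m_k}
\]
with the diagonal Higgs field. This is a Higgs sheaf, since $\theta\wedge\theta$ is computed summand by summand and vanishes on each. It is reflexive, because finite direct sums of reflexive sheaves are reflexive, and it has rank $\sum_k m_kl_k=r$. The characteristic polynomial is multiplicative on direct sums. Concretely, on the dense open set where every summand is locally free, the characteristic polynomial of $\theta$ is $\prod_k P(t_k)^{m_k}=P(s)$, and since the coefficients are global sections of the torsion-free sheaves $\mathrm{Sym}^i\Omega^1_X$, this equality extends to all of $X$. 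Hence $h^{r,\mathrm{refl}}_X(\mathcal{E},\theta)=s$.

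The main care point is that the characteristic polynomial of a merely reflexive Higgs sheaf is defined by restricting to the big open locus where the sheaf is locally free and extending by Hartogs. I therefore need to check that this extension is compatible with the direct-sum multiplicativity. Both sides are sections of locally free sheaves agreeing on a dense open subset, so they coincide. Nothing deeper should be needed, since the substantive input, the construction of a reflexive Higgs sheaf for each irreducible factor, is already provided by Lemma \ref{higgssheaf} and Remark \ref{reflexive}.
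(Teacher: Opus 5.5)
Your proposal is correct and follows the paper's argument: the paper obtains the proposition by combining Lemma \ref{factorization} (the Hitchin morphism on torsion-free Higgs sheaves lands in $\mathscr{B}^r_X$) with Lemma \ref{higgssheaf} and Remark \ref{reflexive} (a reflexive Higgs sheaf realizing any irreducible spectral datum). The paper leaves the passage from arbitrary $s$ to its irreducible factors implicit (cf.\ Remark \ref{red-step}), and your direct-sum construction, together with the multiplicativity and dense-open uniqueness check for characteristic polynomials, is exactly the step needed to fill this in.
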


\begin{remark}
    This allows us to formulate Conjecture \ref{conjecture}
    without using the spectral base as:
    the set-theoretic images of $h^r_X$ and $h^{r,\mathrm{refl}}_X$ agree. 
\end{remark}

In particular, restricting to the case $\mathrm{dim}(X)=2$,
and observing that a reflexive sheaf on a smooth scheme
is locally free outside a subset of codimension three,
we see that Chen-Ng\^o's conjecture is true for surfaces 
(see also \cite[Theorem 1.2]{SS24}).
Alternatively, one can also use the fact that 
a normal surface is Cohen-Macaulay
(\cite[\href{https://stacks.math.columbia.edu/tag/033P}{Tag 033P}]{sp}),
and hence by miracle flatness
(\cite[\href{https://stacks.math.columbia.edu/tag/00R4}{Tag 00R4}]{sp})
the map $\pi'_s$ is flat.
This recovers the main result of Song and Sun.

\begin{cor}[{\cite[Theorem 1.2]{SS24}} and {\cite[Remark 2.16]{HLM24}}]
Let $X$ be a smooth, proper surface.
Then the spectral base $\mathscr{B}^r_X$ 
is equal to the set-theoretic image of $h^r_X$ for all $r$.
\end{cor}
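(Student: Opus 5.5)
Fix a smooth proper surface $X$ and $r\in\mathbb{N}$. By Lemma \ref{factorization}, $h^r_X$ factors through $\mathscr{B}^r_X$, so its set-theoretic image lies in $\mathscr{B}^r_X$. It therefore remains to show that every $\kk$-point $s\in\mathscr{B}^r_X$ lies in the image of the Hitchin morphism from the stack of Higgs \emph{bundles}.

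The first step is a reduction to irreducible data. The origin is hit by the trivial bundle of rank $r$ with zero Higgs field. For $s\neq0$, Lemma \ref{zerodisc} gives a factorization $s=\prod_k t_k^{m_k}$ with each $t_k\in(l_k)$. Suppose each $t_k$ is realized by a Higgs bundle $(\mathcal{E}_k,\theta_k)$ of rank $l_k$. Then the direct sum $\bigoplus_k(\mathcal{E}_k,\theta_k)^{\oplus m_k}$ is a Higgs bundle of rank $r$. Its characteristic polynomial is $\prod_k P(t_k)^{m_k}=P(s)$, because characteristic polynomials are multiplicative in direct sums. Hence it suffices to treat $s\in(r)$, as already noted in Remark \ref{red-step}.

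For $s\in(r)$, take the normalized spectral cover $\pi'_s:X'_s\to X$. By Lemma \ref{higgssheaf}, $(\pi'_{s*}\mathcal{O}_{X'_s},\theta')$ is a Higgs sheaf with spectral datum $s$. The remaining point is to show that $\pi'_{s*}\mathcal{O}_{X'_s}$ is locally free, and here we use that $\dim X=2$. Since $X'_s$ is normal of dimension $2$, it satisfies Serre's $S_2$ condition. A two-dimensional $S_2$ scheme is Cohen--Macaulay (\cite[\href{https://stacks.math.columbia.edu/tag/033P}{Tag 033P}]{sp}); the one-dimensional local rings are normal, hence regular, so they are Cohen--Macaulay as well. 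The morphism $\pi'_s$ is finite and surjective between equidimensional schemes, so its fibers are zero-dimensional. Miracle flatness (\cite[\href{https://stacks.math.columbia.edu/tag/00R4}{Tag 00R4}]{sp}) therefore shows that $\pi'_s$ is flat, and so $\pi'_{s*}\mathcal{O}_{X'_s}$ is locally free of rank $r$.

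As a cross-check, the same conclusion follows from Remark \ref{reflexive}. That remark shows $\pi'_{s*}\mathcal{O}_{X'_s}$ is reflexive, and a reflexive sheaf on a smooth surface is locally free, because its singular locus has codimension at least $3$.

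The main obstacle is only to make sure the hypotheses of miracle flatness apply. One needs that $X'_s$ is irreducible and of dimension $2$, which holds because $\widetilde{X}_s$ is integral and finite surjective over $X$. One also needs the local dimension equality $\dim\mathcal{O}_{X'_s,x'}=\dim\mathcal{O}_{X,\pi'_s(x')}$, which holds by finiteness and going-down over the normal base $X$. Given the reduction in the first step, the rest is formal.
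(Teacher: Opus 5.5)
Your proposal is correct and follows the paper's argument. You reduce to irreducible spectral data (Remark~\ref{red-step}, via direct sums), take the normalized spectral cover of Lemma~\ref{higgssheaf}, and obtain local freeness of $\pi'_{s*}\mathcal{O}_{X'_s}$ either from reflexivity on a smooth surface or from Cohen--Macaulayness of a normal surface plus miracle flatness; these are exactly the two routes the paper indicates, and you have simply written out the direct-sum reduction and the dimension hypothesis for miracle flatness more explicitly.
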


\subsection{Birational morphisms}

Let $f:Y\to X$ be a birational morphism between smooth proper schemes.
The natural morphism $f^*\Omega^1_X\to\Omega^1_Y$ induces morphisms
$\mathrm{Sym}^kf^*\Omega^1_X\to\mathrm{Sym}^k\Omega^1_Y$ for all $k\ge1$.
Taking global sections and composing with
$H^0(X,\mathrm{Sym}^k\Omega^1_X)\to H^0(X,\mathrm{Sym}^kf^*\Omega^1_X)$
for $1\le k\le r$,
gives a map $\mathscr{A}^r_X\to\mathscr{A}^r_Y$ between Hitchin bases, which we denote by $f^*$. Restricting to $\mathscr{B}^r_X$, we obtain a morphism $f^*:\mathscr{B}^r_X\to\mathscr{B}^r_Y$ between spectral bases, which is actually an isomorphism by \cite[Lemma 2.6]{HLM24} or \cite[Theorem 1.4]{SS24}.

\begin{lemma}
    Let $X$ be a smooth proper variety of dimension $d$
    and fix $r\in\mathbb{N}$.
    Consider a $\kk$-point $s\in \mathscr{B}^r_X$.
    Then there exists a birational morphism
    $\pi:Y\to X$ of smooth proper varieties
    such that $\pi^{*}s$ lies in the image of
    $h^r_Y$.
\end{lemma}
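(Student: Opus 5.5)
Since $\mathscr{B}^r_X\cong\mathscr{B}^r_Y$ by \cite[Theorem 1.4]{SS24}, it suffices to produce some Higgs \emph{bundle} on a suitable smooth birational model $Y$ whose spectral datum equals $\pi^*s$. The plan is the standard resolution of singularities of a torsion-free sheaf.

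First, by Proposition \ref{weakprop}, choose a reflexive Higgs sheaf $(\mathcal{E},\theta)$ on $X$ with spectral datum $s$. Concretely, when $s\in(r)$ take $\mathcal{E}=\pi'_{s*}\mathcal{O}_{X'_s}$ as in Lemma \ref{higgssheaf}. In general, write $s=\prod t_k^{m_k}$ as in Lemma \ref{zerodisc} and take direct sums of the corresponding sheaves. Let $U\subset X$ be the big open locus where $\mathcal{E}$ is locally free.

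Second, apply a flattening result, for instance Rossi's theorem or Raynaud--Gruson flattening, followed by Hironaka resolution in characteristic $0$. This gives a projective birational morphism $\pi:Y\to X$ from a smooth proper variety $Y$, isomorphic over $U$, such that $\mathcal{F}:=\pi^*\mathcal{E}/\mathrm{tors}$ is locally free. Concretely, one can blow up a Fitting ideal of $\mathcal{E}$ so that the quotient becomes locally free, and then resolve.

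Third, equip $\mathcal{F}$ with a Higgs field. The pullback field $\pi^*\theta:\pi^*\mathcal{E}\to\pi^*\mathcal{E}\otimes\pi^*\Omega^1_X$ preserves torsion. Composing with $\pi^*\Omega^1_X\to\Omega^1_Y$ then gives $\theta_Y:\mathcal{F}\to\mathcal{F}\otimes\Omega^1_Y$, using that $\mathcal{F}\otimes\Omega^1_Y$ is torsion-free, so torsion maps to zero.

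The remaining checks are generic. Integrability $\theta_Y\wedge\theta_Y=0$ holds on the dense open $\pi^{-1}(U)\cong U$, hence everywhere since the target is torsion-free. Likewise the characteristic coefficients of $\theta_Y$ are sections of $\mathrm{Sym}^i\Omega^1_Y$. They agree with $\pi^*s_i$ on $\pi^{-1}(U)$, hence everywhere, because both are sections of a vector bundle on an integral scheme. Thus $h^r_Y(\mathcal{F},\theta_Y)=\pi^*s$, where $\pi^*s$ is computed via the map $f^*$ on Hitchin bases.

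The main obstacle I expect is the second step: choosing the modification so that $\pi^*\mathcal{E}/\mathrm{tors}$ is genuinely locally free on a \emph{smooth} $Y$. Flattening gives local freeness on some blow-up $Y_0$. A further resolution $Y\to Y_0$ preserves this, since the pullback of a locally free sheaf is locally free. However, one must check that $\pi^*\mathcal{E}/\mathrm{tors}$ on $Y$ coincides with the pullback of the locally free quotient on $Y_0$. This holds because pulling back the surjection onto the locally free quotient from $Y_0$ to $Y$ remains surjective, with torsion kernel, as the map is an isomorphism over the dense locus $U$.
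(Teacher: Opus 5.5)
Your proposal is correct and follows the paper's proof essentially step by step. Both arguments take a reflexive Higgs sheaf from Proposition \ref{weakprop}, pass to a birational modification on which the strict transform (equivalently, $\pi^*\mathcal{E}$ modulo torsion) becomes locally free, resolve singularities, build the Higgs field through $\pi^*\Omega^1_X\to\Omega^1_Y$, and compare characteristic polynomials on the locus where $\pi$ is an isomorphism. Your extra checks (integrability, and compatibility of the torsion-free quotient with the further resolution) fill in details the paper leaves implicit, while the appeal to $\mathscr{B}^r_X\cong\mathscr{B}^r_Y$ is harmless but not needed.
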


\begin{proof}
    Via decomposing $s$ into irreducible factors
    and applying \Cref{weakprop}
    we obtain a reflexive Higgs sheaf
    $\mathcal{F}$ on $X$ with characteristic polynomial $s$. 
    By \cite[\href{https://stacks.math.columbia.edu/tag/0ESN}{Tag 0ESN}]{sp}
    we can find a birational morphism $\pi':Y'\to X$ such that
    the strict transform of $\pi'^{\ast}\mathcal{F}$ is locally free. 

    Since we are in characteristic $0$ we can further
    resolve the singularities of $Y'$
    to find a birational morphism $\pi:Y\to X$
    with smooth proper $Y$ such that 
    the strict transform $\pi^{[\ast]}\mathcal{F}$
    of $\pi^{\ast}\mathcal{F}$ is locally free.

    We also can pullback the Higgs field 
    $\mathcal{F}\to \mathcal{F}\otimes \Omega^1_X$ to $Y$.
    Taking strict transform
    and composing with the natural morphism
    $\pi^{\ast}\Omega^1_X\to \Omega^1_Y$ yields
    a Higgs field on the strict transform
    $\pi^{[\ast]}\mathcal{F}$ on $Y$.

    Via restricting to the open locus where $\pi$ is an
    isomorphism we conclude that the characteristic
    polynomial of $\pi^{[\ast]}\mathcal{F}$
    coincides with $\pi^{\ast}s$.
\end{proof}

In other words, Conjecture \ref{conjecture} is equivalent to the conjecture that the set-theoretic image of the Hitchin morphism is a birational invariant. 

\section{A stronger version of the conjecture}

In this section we prove our main theorem:
the Hitchin morphism from the Dolbeault moduli space surjects onto the spectral base for $K$-trivial varieties, i.e., a stronger version of the Chen-Ng\^o conjecture holds in this case. Our approach is to study the vanishing locus of the symmetric
differential arising from the discriminant.

Let $X$ be a smooth proper variety.
Let $r\in\mathbb{N}$ be fixed and let $s\in\mathscr{B}^r_X$ 
be a $\kk$-point.
By Lemma \ref{branchlocus},
we have the inclusion $Z'\subset Z$ of branch loci of
$\pi'_s:X'_x\to X$ and $\pi_s:X_s\to X$ (normalized) spectral cover
constructed in \Cref{sec-normalized-spectral-covers},
and $Z$ is given by the vanishing locus of the discriminant $D(s)$.
If $\mathrm{codim}_X Z=1$,
then in general the (normalized) spectral cover may not be flat.
However, if $\mathrm{codim}_XZ\ge2$,
then it follows that $Z'=\emptyset$
and $\pi_s':X'_s\to X$ is \'etale by purity of branch locus.
We want to determine sufficient conditions for which the latter case occurs.

Consider the vector space $H^0(X,\mathrm{Sym}^k\Omega^1_X)$
for each $k\ge1$, and define the subsets $W^i$ as follows.

\begin{align*}
    W^i:= \{\omega\in
    H^0(X,\mathrm{Sym}^k\Omega^1_X)\;|\;\mathrm{codim}_XV(\omega)\le i\},
\end{align*}
where $V(\omega)\subset X$ denotes the vanishing locus of $\omega$. 

The $W^i$ are closed algebraic subsets of
$H^0(X,\mathrm{Sym}^k\Omega^1_X)$,
but they may not be linear unless $i=1$ and $k=1$ 
(\cite[Theorem 1.11]{DHL24}).
By definition, we have the inclusion $W^i\subset W^{i+1}$ for all $i$.
This defines a stratification of 
$H^0(X,\mathrm{Sym}^k\Omega^1_X)$ given by  
\begin{align*}
    W^1\subset W^2\subset\dots\subset W^d\subset
    H^0(X,\mathrm{Sym}^k\Omega^1_X).
\end{align*}

Consider a spectral datum $s\in\mathscr{B}^r_X(\kk)$ 
such that its image 
$D(s)$ via the discriminant map lies in $H^0(X,\mathrm{Sym}^{r(r-1)}\Omega^1_X)\setminus W^1$.
Then, by the purity of branch locus,
the normalized spectral cover $X'_s\to X$ 
is \'etale and $X'_s$ is smooth. 

For a general point $s\in\mathscr{B}^r_X$, 
it seems a difficult but interesting problem
to determine conditions such that 
$D(s)\in H^0(X,\mathrm{Sym}^{r(r-1)}\Omega^1_X)\setminus W^1$. 
This is related to the vanishing loci of symmetric differentials, 
which (to our knowledge) have not been widely studied.

\begin{lemma}\label{dense}
    Let $X$ be a smooth proper variety such that $\mathscr{B}^r_X$
    is irreducible. 
    Suppose there is a point $s\in\mathscr{B}^r_X$
    such that $D(s)\in H^0(X,\mathrm{Sym}^{r(r-1)}\Omega^1_X)\setminus W^1$.
    Then the preimage 
    $D^{-1}(\mathrm{im}(D)\setminus W^1)\subset\mathscr{B}^r_X$
    is a dense open subset.
\end{lemma}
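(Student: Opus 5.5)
The plan is to show that $D^{-1}(\mathrm{im}(D)\setminus W^1)$ equals $D^{-1}(H^0(X,\mathrm{Sym}^{r(r-1)}\Omega^1_X)\setminus W^1)$. This set is the preimage under the morphism $D$ of the complement of the closed subset $W^1$, so it is open. By hypothesis it is also non-empty, and in an irreducible space any non-empty open subset is dense. The substantive input is the closedness of $W^1$ in $H^0(X,\mathrm{Sym}^{r(r-1)}\Omega^1_X)$. The paper asserts this just before the lemma, but I would verify it, since everything rests on it.

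First, I note that $D:\mathscr{B}^r_X\to H^0(X,\mathrm{Sym}^{r(r-1)}\Omega^1_X)$ is a morphism, as constructed above from the $\gl_d$-invariant map $B^r\to\mathrm{Sym}^{r(r-1)}\mathbb{A}^d$. Next, I check that $W^1$ is closed; note that $0\in W^1$ by convention, since $V(0)=X$. Set $V:=H^0(X,\mathrm{Sym}^{k}\Omega^1_X)$ with $k=r(r-1)$, and let $\mathcal{V}\subset V\times X$ be the universal vanishing locus. This is a closed subscheme cut out by the evaluation of the tautological section of $\mathrm{pr}_X^*\mathrm{Sym}^k\Omega^1_X$. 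Let $q:\mathcal{V}\to V$ be the projection; it is proper because $X$ is proper. The fiber $q^{-1}(\omega)$ is $V(\omega)$, and $\omega\in W^1$ exactly when $\dim q^{-1}(\omega)\ge d-1$. By upper semicontinuity of fiber dimension (Chevalley), the locus $\{y\in\mathcal{V}: \dim_y q^{-1}(q(y))\ge d-1\}$ is closed in $\mathcal{V}$. Since $q$ is proper, its image $W^1$ is closed in $V$.

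Having $W^1$ closed, the set $\mathcal{U}:=D^{-1}(V\setminus W^1)$ is open in $\mathscr{B}^r_X$. It coincides with $D^{-1}(\mathrm{im}(D)\setminus W^1)$, because preimages only see points of the image. By assumption $s\in\mathcal{U}$, so $\mathcal{U}$ is non-empty. Since $\mathscr{B}^r_X$ is irreducible, $\mathcal{U}$ is dense. If $\mathscr{B}^r_X$ is only a closed subscheme of the affine space $\mathscr{A}^r_X$, the statement concerns its underlying topological space, and irreducibility there suffices.

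The only step with real content is the closedness of $W^1$, in particular applying semicontinuity correctly. I would use the version for proper morphisms, under which $\omega\mapsto\dim V(\omega)$ is upper semicontinuous on $V$. Note that codimension at most one is equivalent to $\dim V(\omega)\ge d-1$ because $X$ is irreducible of dimension $d$. The remaining steps are formal.
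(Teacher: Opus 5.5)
Your proof is correct and follows the same route as the paper: closedness of $W^1$ makes the preimage open, the point $s$ makes it non-empty, and irreducibility of $\mathscr{B}^r_X$ makes it dense. The only addition is that you verify the closedness of $W^1$, using upper semicontinuity of fiber dimension for the proper projection from the universal vanishing locus; the paper simply asserts this just before the lemma.
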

\begin{proof}
  The hypothesis implies that the image of $D$
  is not contained entirely in $W^1$,
  therefore $\mathrm{im}(D)\cap W^1$ is a closed subset of the image.
  The complement of its preimage is
  a non-empty open, thus dense in $\mathscr{B}^r_X$ by irreducibility.
\end{proof}

\begin{remark}
    We can weaken the irreducibility condition in Lemma \ref{dense}
    to the condition that each irreducible component of 
    $\mathscr{B}^r_X$ contains a point $s$ such that 
    $D(s)\in H^0(X,\mathrm{Sym}^{r(r-1)}\Omega^1_X)\setminus W^1$
    to arrive at the same conclusion. 
\end{remark}

A natural follow-up question is:

\begin{question}
(When) is $\mathscr{B}^r_X$ irreducible?
\end{question}

There are several examples of varieties $X$ such that the spectral base 
$\mathscr{B}^r_X$ is irreducible for all $r\in\mathbb{N}$.
Any variety that has a "sufficiently negative" cotangent bundle has
$\mathscr{A}^r_X=\{0\}$ and thus $\mathscr{B}^r_X=\{0\}$.
Interestingly, quotients of bounded symmetric domains
of sufficiently large dimension have $\mathscr{B}^r_X=\{0\}$ for all $r$,
but the dimension of $\mathscr{A}^r_X$ is large for $r$ 
sufficiently large (see \cite[Theorem 1.3]{HLM24}).

It follows from \cite[Proposition 8.1]{CN20}
that ruled surfaces and non-isotrivial elliptic surfaces 
with reduced fibers satisfy $\mathscr{B}^r_X=\mathscr{A}^r_X$
for all $r\in\mathbb{N}$,
and thus have irreducible spectral base. 

The spectral base for an Abelian variety $X$ is given by
$\mathscr{B}^r_X\cong\mathrm{Chow}^r(H^0(X,\Omega^1_X))$ 
(see \cite[Example 5.1]{CN20}), and is thus irreducible. It follows from \cite[Theorem 1.4]{PW26} that hyperelliptic varieties also have irreducible spectral base.  

A nice consequence of the normalized spectral cover associated to $s$
being \'etale is that the vector bundle $\pi'_{s*}\mathcal{O}_{X'_s}$
corresponds to a representation of the \'etale fundamental group
$\pi_1^{\text{\'et}}(X)$.
More precisely, the following is true.

\begin{lemma}\label{polystability}
    Let $f:Y\to X$ be a finite \'etale cover 
    between normal proper varieties. 
    Then $f_*\mathcal{O}_Y$ is an \'etale trivializable 
    vector bundle on $X$, that is, 
    there exists a finite \'etale cover $X'\to X$
    such that the pullback of $f_*\mathcal{O}_Y$ is isomorphic to 
    the trivial bundle on $X'$. 
    
    In particular, if $X$ and $Y$ are projective, then $f_*\mathcal{O}_Y$ is a polystable vector bundle on $X$ and all its Chern classes vanish. 
\end{lemma}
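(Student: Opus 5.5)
The plan is to pass to a Galois closure and use that a finite étale Galois cover trivializes the pushforward of the structure sheaf along a cover it dominates. We may assume $X$ is connected, hence integral since it is normal. Because $f$ is finite étale, $f_*\mathcal{O}_Y$ is locally free of rank $n=\deg f$. By Galois theory of the étale fundamental group (or by normalizing $X$ in the Galois closure of the function field extension $\kk(Y)/\kk(X)$, applied to each component of $Y$), there is a connected finite étale Galois cover $g:X'\to X$ with group $G$ that factors through every connected component of $Y$. Concretely, $X'\times_X Y\to X'$ is a finite étale cover that is totally split, i.e.\ $X'\times_X Y\cong\coprod_{i=1}^n X'$ over $X'$. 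This is the standard fact that a finite étale cover becomes trivial after pulling back along a Galois cover dominating it. One checks it on $\pi_1^{\text{\'et}}$-sets: the fiber functor identifies $Y$ with a finite $\pi_1$-set on which the open normal subgroup $\pi_1(X')$ acts trivially.

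Flat base change then gives
\[
g^*f_*\mathcal{O}_Y\cong (f')_*\mathcal{O}_{X'\times_XY}\cong\bigoplus_{i=1}^n\mathcal{O}_{X'},
\]
where $f'$ is the base-changed map. This proves the first claim.

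For the projective case, let $L=\mathcal{O}_X(1)$ be the fixed polarization, so $g^*L$ is ample on $X'$. The trivial bundle $\mathcal{O}_{X'}^{\oplus n}$ is polystable of slope $0$ with respect to $g^*L$ and has vanishing Chern classes. Chern classes commute with pullback, and $g^*:H^*(X,\mathbb{Q})\to H^*(X',\mathbb{Q})$ is injective for a finite surjective map: either $g_*g^*=\deg g\cdot\id$, or we work in the Chow ring with $\mathbb{Q}$-coefficients. Hence $c_i(f_*\mathcal{O}_Y)=0$. For semistability, a saturated subsheaf $\mathcal{F}\subset f_*\mathcal{O}_Y$ pulls back to a subsheaf of $\mathcal{O}_{X'}^{\oplus n}$, and slopes scale by $\deg g$: $\mu_{g^*L}(g^*\mathcal{F})=\deg g\cdot\mu_L(\mathcal{F})$. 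So $\mu_L(\mathcal{F})\le 0=\mu_L(f_*\mathcal{O}_Y)$, and $f_*\mathcal{O}_Y$ is semistable.

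Polystability is the main obstacle. Here I would use the fact that in characteristic $0$ the pullback of a semistable sheaf under a finite separable map is semistable, and that the descent of the socle works. Concretely, the socle (maximal polystable subsheaf) of $g^*\mathcal{E}$, with $\mathcal{E}=f_*\mathcal{O}_Y$, is unique, so it is $G$-invariant and descends. Since $g^*\mathcal{E}$ is already polystable, the descended socle is all of $\mathcal{E}$, but that by itself does not yet split $\mathcal{E}$. A cleaner route is the one I would actually use. By construction, $f_*\mathcal{O}_Y$ corresponds to the $G$-representation $V=\kk[G\text{-set of }Y\text{'s fiber}]$. Under the equivalence between $G$-equivariant trivial bundles on $X'$ and representations of $G$, we have $f_*\mathcal{O}_Y\cong(\mathcal{O}_{X'}\otimes V)^G$. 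Maschke's theorem (characteristic $0$) decomposes $V=\bigoplus V_j$ into irreducibles, giving $f_*\mathcal{O}_Y\cong\bigoplus_j(\mathcal{O}_{X'}\otimes V_j)^G$. Each summand is stable of slope $0$: a saturated destabilizing subsheaf of slope $0$ pulls back to a degree-$0$ subsheaf of a trivial bundle. Such a subsheaf is itself trivial, $\mathcal{O}_{X'}\otimes W$, because a degree-$0$ saturated subsheaf of a trivial bundle on a normal projective variety is a trivial subbundle spanned by global sections. It is $G$-stable, so $W$ is a subrepresentation, and irreducibility of $V_j$ forces $W=0$ or $W=V_j$. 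Therefore $f_*\mathcal{O}_Y$ is a direct sum of stable bundles of slope $0$, i.e.\ polystable.
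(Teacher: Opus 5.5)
Your proof is correct, but it takes a different route from the paper in two places.

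\textbf{Trivialization.} The paper takes the Galois closure $g\colon Z\to X$ and first reduces to the projective case via Chow's lemma and normalization. It then embeds $g^*f_*\mathcal{O}_Y$ into $g^*g_*\mathcal{O}_Z\cong\mathcal{O}_Z^{\oplus|G|}$ and concludes triviality by a slope argument: a semistable slope-$0$ subsheaf of a trivial bundle is trivial. You instead use that the Galois closure totally splits $Y$, i.e.\ $X'\times_XY\cong\coprod X'$. Affine base change then gives $g^*f_*\mathcal{O}_Y\cong\mathcal{O}_{X'}^{\oplus n}$ at once. This needs no projectivity, normality or stability theory, so the Chow-lemma reduction disappears.

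\textbf{Polystability.} The paper cites \cite[Lemma 3.2.3]{hl}, whereas you give a self-contained argument. You identify $f_*\mathcal{O}_Y$ with the bundle attached to the $G$-torsor $X'\to X$ and the permutation representation $\kk^F$, where $F=\mathrm{Hom}_X(X',Y)$. Maschke's theorem splits this bundle, and each summand is stable because a $G$-stable degree-$0$ saturated subsheaf of $\mathcal{O}_{X'}\otimes V_j$ has the form $\mathcal{O}_{X'}\otimes W$ with $W$ a subrepresentation. This buys more information: the stable summands correspond exactly to the irreducible constituents of the permutation representation. The cost is invoking Galois descent and the fact that a degree-$0$ saturated subsheaf of a trivial bundle on a normal projective variety is a trivial subbundle. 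You state that fact without proof, and it deserves one line. The map $\det\mathcal{F}\to\Lambda^k\mathcal{O}^{\oplus n}$ forces $\det\mathcal{F}\cong\mathcal{O}$, so the Pl\"ucker coordinates are constant on the big open set where $\mathcal{F}$ is a subbundle; reflexivity then gives $\mathcal{F}=\mathcal{O}\otimes W$ everywhere. The paper's own triviality step tacitly relies on the same fact.

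\textbf{Remaining steps.} Your Chern class argument matches the paper's. The abandoned socle paragraph can simply be deleted.
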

\begin{proof}
    Let $g:Z\to Y\to X$ be the Galois closure of $f$.

    Applying Chow's lemma and normalization allow 
    us to find a normal projective
    variety $\tilde{X}$ together with a birational morphism
    $\pi:\tilde{X}\to X$.
    Set $\tilde{Y}:=\tilde{X}\times_X Y$ and 
    $\tilde{Z}:=\tilde{X}\times_X Z$.
    Then $\tilde{g}:\tilde{Z}\to \tilde{Y}\xrightarrow{\tilde{f}} \tilde{X}$ are 
    \'etale covers of normal projective varieties
    and $\tilde{Z}\to \tilde{X}$ is Galois.
    Further, the morphisms $\pi_Z:\tilde{Z}\to Z$ and 
    $\pi_Y:\tilde{Y}\to Y$ are birational.
    Then we have $\pi_{Z,\ast}\mathcal{O}_{\tilde{Z}}=\mathcal{O}_Z$
    and by the projection formula it suffices to show
    that $\pi^{\ast}_Z g^{\ast}f_{\ast}\mathcal{O}_Y$
    is the trivial bundle.

    Since $f$ is affine, we have
    \[
        \pi^{\ast}_Z g^{\ast}f_{\ast}\mathcal{O}_Y=
        \tilde{g}^{\ast} \pi^{\ast} f_{\ast}\mathcal{O}_Y =
        \tilde{g}^{\ast} \tilde{f}_{\ast} \mathcal{O}_{\tilde{Y}},
    \]
    see
    \cite[\href{https://stacks.math.columbia.edu/tag/02KG}{Lemma 02KG}]{sp}.
    Thus, we are reduced to showing the lemma in the case where $X$ 
    and $Y$ are normal projective varieties.
    We assume this from now on.

    Note that $g$ is an \'etale Galois cover satisfying 
    $Z\times_X Z \cong \bigsqcup_{\sigma \in \mathrm{Gal}(Z/X)} Z$.
    Then we conclude via affine base change that
    $g^{\ast}g_{\ast}\mathcal{O}_{Z} \cong 
    \bigoplus_{\sigma\in\mathrm{Gal}{Z/X}}\mathcal{O}_Z$.
    By adjunction we have $f_{\ast}\mathcal{O}_Y \subseteq g_{\ast}\mathcal{O}_{Z}$
    and conclude by flatness of $g$ that
    $g^{\ast}f_{\ast}\mathcal{O}_Y\subseteq g^{\ast}g_{\ast}\mathcal{O}_{Z}$.
    Thus, by the previous discussion
    we find that $g^{\ast}f_{\ast}\mathcal{O}_Y$ is a subsheaf of the trivial bundle.

    Fix a polarization $\mathcal{O}_X(1)$ of $X$ 
    and let $\mathcal{O}_Y(1)$ and $\mathcal{O}_Z(1)$ 
    the polarizations obtained via
    pullback to $Y$ and $Z$ respectively.
    We consider slope-(semi)stability with respect to these polarizations.
    
    Recall that slope-semistability is preserved under pushforward and
    pullback along an \'etale cover, by \cite[Lemma 3.2.1]{hl}. 
    Recall that the degree of a bundle does not change under
    pushforward along an \'etale cover,
    and the degree under pullback is multiplied by the degree of the cover.
    Thus, $g^{\ast}f_{\ast}\mathcal{O}_Y$
    slope-semistable of slope $0$ and
    contained in the trivial bundle by the above discussion.
    We conclude that $g^{\ast}f_{\ast}\mathcal{O}_Y$ is trivial.

    Since we are in characteristic $0$ this
    implies that $f_{\ast}\mathcal{O}_Y$ is polystable by
    \cite[Lemma 3.2.3]{hl}. Note that pullback along $g$ induces an 
    injection between cohomology groups of $X$ and $Z$.
    The naturality property of Chern classes then implies
    that all Chern classes of $f_{\ast}\mathcal{O}_Y$ vanish.
\end{proof}

Recall that $M^{P,G-ss}_X$ denotes the good moduli space of Gieseker
semistable Higgs bundles of rank $r$ on $X$ with Hilbert polynomial $P$,
and $P_0$ denotes the Hilbert polynomial of the trivial bundle of rank $r$.
The \emph{Dolbeault moduli space} is defined as the moduli space
$M^r_{X,\mathrm{Dol}}:=M^{G-ss,P_0}_X$. 
The $\kk$-points of $M^r_{X,\mathrm{Dol}}$
correspond to Gieseker polystable rank $r$ 
Higgs bundles with vanishing Chern classes.

We return to the situation where $s\in\mathscr{B}^r_X(\kk)$ 
is a spectral datum such that its discriminant $D(s)$ lies in
$H^0(X,\mathrm{Sym}^{r(r-1)}\Omega^1_X)\setminus W^1$.  

Recall from Lemma \ref{higgssheaf}
that $\pi'_{s*}\mathcal{O}_{X'_s}$
is equipped with a Higgs field $\theta'$
whose associated spectral datum is $s$.
It follows from Proposition \ref{polystability}
that the Higgs bundle $(\pi'_{s*}\mathcal{O}_{X'_s},\theta')$
corresponds to a point in the Dolbeault moduli space $M^r_{X,\mathrm{Dol}}$.

Thus, one might wonder when a stronger version
of Chen-Ng\^o's conjecture holds.

\begin{question}
When is the restricted Hitchin morphism
$h^r_{X,\mathrm{Dol}}:M^r_{X,\mathrm{Dol}}\to\mathscr{B}^r_X$ surjective?
\end{question}

\begin{remark}
    Note that for an irreducible spectral datum,
    that is, a $\kk$-point $s$ of $(r)\subset \mathscr{B}^r_X$,
    any torsion-free Higgs sheaf with spectral datum $s$
    is in fact slope-stable:
    it does not even have a non-trivial saturated Higgs-subsheaf
    since the characteristic polynomial is multiplicative
    in short exact sequences.

    However, direct sums of slope-stable sheaves need
    not be slope-polystable since the slope may vary
    and the spectral cover construction does not lend itself
    to controlling the slope (as far as we can see).
\end{remark}

In general, $h^r_{X,\mathrm{Dol}}$ is not surjective; 
see \cite[Example 3.4 (5)]{HL24} for a counterexample.
However, using the properness of $h^r_{X,\mathrm{Dol}}$
and assuming that $\mathscr{B}^r_X$ is irreducible,
we make the following observation, 
which partially answers the above question.

\begin{proposition}\label{strong}
    Let $X$ be a smooth projective scheme and let $r\in\mathbb{N}$
    be fixed.
    Suppose that $\mathscr{B}^r_X$ is irreducible,
    and that there is a point $s\in\mathscr{B}^r_X$ such that
    $D(s)$ lies in $H^0(X,\mathrm{Sym}^{r(r-1)}\Omega^1_X) \setminus W^1$.
    Then the map $h^r_{X,\mathrm{Dol}}:M^r_{X,\mathrm{Dol}}\to\mathscr{B}^r_X$ 
    is surjective.
\end{proposition}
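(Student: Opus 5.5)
Since $\mathscr{B}^r_X$ is irreducible and there is a point $s_0$ with $D(s_0)\notin W^1$, \Cref{dense} shows that $\mathcal{U}:=D^{-1}(\mathrm{im}(D)\setminus W^1)$ is a dense open subset of $\mathscr{B}^r_X$. The plan is to show that every $\kk$-point of a dense open subset of $\mathcal{U}$ lies in the image of $h^r_{X,\mathrm{Dol}}$. Properness of $h^r_{X,\mathrm{Dol}}$ then finishes the argument: its image is closed, contains a dense subset of the irreducible space $\mathscr{B}^r_X$, and hence is all of $\mathscr{B}^r_X$. Properness is the standard result for the Hitchin morphism on moduli of semistable Higgs sheaves (Simpson, Langer); the paper invokes it above the statement. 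The target is $\mathscr{B}^r_X$ by \Cref{factorization}.

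First step: shrink $\mathcal{U}$ to irreducible spectral data. The stratification (\ref{strat-spectralbase}) and \Cref{lemma-proper} show that the strata $(p,q)$ are closed and $(r)$ is open. Since $D(s_0)\neq0$, the datum $s_0$ has no repeated factor, but it may well be reducible. So I would split into two cases. If $(r)\cap\mathcal{U}\neq\emptyset$, then $(r)\cap\mathcal{U}$ is dense open. Otherwise, by irreducibility, $\mathscr{B}^r_X$ equals a single closed stratum $(p,q)$, the image of $m_{p,q}$. In that case it is cleaner to handle an arbitrary point $s\in\mathcal{U}$ directly. Write $s=\prod_k t_k$ as in \Cref{zerodisc}; every multiplicity $m_k$ equals $1$ because $D(s)\neq0$. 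The discriminant is multiplicative up to resultant factors, $D(s)=\prod_k D(t_k)\cdot\prod_{k<l}\mathrm{Res}(t_k,t_l)^2$, so $V(D(t_k))\subset V(D(s))$ and each $V(D(t_k))$ has codimension $\ge2$. This lets me work with every point of $\mathcal{U}$ and avoids the case split altogether.

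Second step: for $s\in\mathcal{U}$ with irreducible factors $t_k\in(l_k)$, the branch locus $Z_k=V(D(t_k))$ of $\pi_{t_k}$ has codimension $\ge2$. By \Cref{branchlocus}, the branch locus $Z'_k$ of the normalized spectral cover satisfies $Z'_k\subset Z_k$. Purity of the branch locus then gives $Z'_k=\emptyset$, so $\pi'_{t_k}\colon X'_{t_k}\to X$ is finite étale and $X'_{t_k}$ is smooth and projective. \Cref{higgssheaf} gives the Higgs bundle $(\pi'_{t_k*}\mathcal{O}_{X'_{t_k}},\theta'_k)$ with spectral datum $t_k$. \Cref{polystability} shows its underlying bundle is polystable with vanishing Chern classes.

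Third step: set $\mathcal{E}=\bigoplus_k\pi'_{t_k*}\mathcal{O}_{X'_{t_k}}$ with the direct sum Higgs field. Its characteristic polynomial is $\prod_k P(t_k)=P(s)$. The underlying bundle is polystable of slope $0$ with vanishing Chern classes, so its Hilbert polynomial is $P_0$. The main obstacle is Gieseker semistability of the Higgs bundle. Any Higgs subsheaf is in particular a subsheaf of a slope-semistable bundle with vanishing Chern classes. I would argue that a saturated subsheaf of slope $0$ in such a bundle is a subbundle with vanishing Chern classes, via Simpson's correspondence or the étale trivialization in \Cref{polystability}. Its reduced Hilbert polynomial then equals $P_0/r$, so Gieseker semistability follows from slope semistability. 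Hence $\mathcal{E}$ defines a point of $M^r_{X,\mathrm{Dol}}$ over $s$. Combined with density of $\mathcal{U}$ and properness, this gives surjectivity.
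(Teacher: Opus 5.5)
Your proposal is correct and follows the paper's route: you get density of $D^{-1}(\mathrm{im}(D)\setminus W^1)$ from Lemma~\ref{dense}, \'etaleness of the normalized spectral covers from purity of the branch locus, membership in $M^r_{X,\mathrm{Dol}}$ from Lemma~\ref{polystability}, and surjectivity from properness of $h^r_{X,\mathrm{Dol}}$. You also make explicit two points the paper's proof passes over: the paper constructs the normalized spectral cover only for $s\in(r)$, which you handle by factoring a possibly reducible $s$ with $D(s)\notin W^1$ into irreducible factors satisfying $V(D(t_k))\subset V(D(s))$, and the paper only asserts Gieseker semistability, which you verify via saturated slope-$0$ subsheaves.
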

\begin{proof}
    It follows from the hypothesis and from Lemma \ref{dense}
    that the subset $U$ of spectral data $s$ such that $D(s)$ 
    lands outside $W^1$ is a dense open subset of ${B}^r_X$.  

    For every point $s\in U$, the normalized spectral cover
    $\pi'_s:X'_s\to X$ is \'etale,
    and thus the associated Higgs bundle
    $(\pi'_{s*}\mathcal{O}_{X'_s},\theta')$
    is \'etale trivializable by Proposition \ref{polystability}
    and corresponds to a point in $M^r_{X,\mathrm{Dol}}$.
    In other words, $U$ is contained in $\mathrm{im}(h^r_{X,\mathrm{Dol}})$.

    By \cite[Theorem 6.11]{simp2},
    we know that $h^r_{X,\mathrm{Dol}}$ is proper.
    In particular, $\mathrm{im}(h^r_{X,\mathrm{Dol}})$ 
    is closed in $\mathscr{B}^r_X$.
    Since $U\subset\mathrm{im}(h^r_{X\mathrm{Dol}})$ and 
    $U$ is dense in $\mathscr{B}^r_X$,
    we conclude that $\mathrm{im}(h^r_{X,\mathrm{Dol}})$
    coincides with $\mathscr{B}^r_X$ set-theoretically.
\end{proof}

Similarly as for Lemma \ref{dense},
one can replace the irreducibility of $\mathscr{B}^r_X$
by the condition that every irreducible component of $\mathscr{B}^r_X$
contains a point $s$ such that $D(s)$ is not contained in $W^1$,
to arrive at the same conclusion.

Although Proposition \ref{strong} provides a sufficient condition for the
restricted Hitchin morphism $M^r_{X,\mathrm{Dol}}\to\mathscr{B}^r_X$ 
to be surjective, the irreducibility of
$\mathscr{B}^r_X$ is not a reasonable condition to check.
We can replace this
with the condition that the subsets $W^1$ be zero and still achieve the
surjectivity of $M^r_{X,\mathrm{Dol}}\to\mathscr{B}^r_X$.

\begin{proposition}\label{generalprop}
    Let $X$ be a smooth proper scheme and let $r\in\mathbb{N}$ be fixed.
    Suppose that $W^1=\{0\}\subset H^0(X,\mathrm{Sym}^{k(k-1)}\Omega^1_X)$
    for all $1\le k\le r$.
    Then for every $s\in\mathscr{B}^k_X$ the fiber $(h^k_X)^{-1}(s)$
    contains an \'etale trivializable Higgs bundle for all $1\le k\le r$.

    In particular, if $X$ is also projective, 
    the morphism $h^k_X:M^k_{X,\mathrm{Dol}}\to\mathscr{B}^k_X$ 
    is surjective for all $1\le k\le r$.
\end{proposition}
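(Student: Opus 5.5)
Fix $1\le k\le r$ and a $\kk$-point $s\in\mathscr{B}^k_X$. The case $s=0$ is handled by the trivial bundle $\mathcal{O}_X^{\oplus k}$ with zero Higgs field, so assume $s\neq0$. By \Cref{zerodisc} we write $s=\prod_{j=1}^n t_j^{m_j}$ with irreducible factors $t_j\in(l_j)\subset\mathscr{B}^{l_j}_X$, each with $D(t_j)\neq0$. Here $\sum_j m_jl_j=k$, so $l_j\le k\le r$. The hypothesis applied with $l_j$ in place of $k$ says $W^1=\{0\}$ in $H^0(X,\mathrm{Sym}^{l_j(l_j-1)}\Omega^1_X)$. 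Hence the vanishing locus of the nonzero section $D(t_j)$ has codimension $\ge2$ in $X$. When $l_j=1$ the discriminant is the empty product $1$ and there is nothing to check: the spectral cover is $X$ itself and $\pi'_{t_j*}\mathcal{O}=\mathcal{O}_X$.

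For each irreducible $t_j$ we pass to the normalized spectral cover $\pi'_{t_j}:X'_{t_j}\to X$ of \Cref{sec-normalized-spectral-covers}. By \Cref{branchlocus}, its branch locus $Z'_j$ is contained in $Z_j=V(D(t_j))$, so $\mathrm{codim}_X Z'_j\ge2$. Since $X'_{t_j}$ is normal and $X$ is smooth, purity of the branch locus forces $Z'_j=\emptyset$, so $\pi'_{t_j}$ is finite \'etale. \Cref{higgssheaf} then shows that $(\mathcal{E}_j,\theta'_j):=(\pi'_{t_j*}\mathcal{O}_{X'_{t_j}},\theta'_j)$ is a Higgs sheaf with spectral datum $t_j$. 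It is locally free because $\pi'_{t_j}$ is finite flat. By \Cref{polystability}, applied to the finite \'etale cover $X'_{t_j}\to X$ of normal proper varieties, $\mathcal{E}_j$ is \'etale trivializable.

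Now set $(\mathcal{E},\theta):=\bigoplus_j(\mathcal{E}_j,\theta'_j)^{\oplus m_j}$. The characteristic polynomial is multiplicative in direct sums, so the spectral datum of $(\mathcal{E},\theta)$ is $\prod_j t_j^{m_j}=s$, and $(\mathcal{E},\theta)$ lies in the fiber $(h^k_X)^{-1}(s)$. A finite direct sum of \'etale trivializable bundles is \'etale trivializable: take a common finite \'etale cover dominating the individual trivializing covers, for instance a connected component of their fiber product, on which each pullback remains trivial.

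For the projective statement, \Cref{polystability} gives that $\mathcal{E}$ is polystable with vanishing Chern classes. It remains to check that $(\mathcal{E},\theta)$ defines a point of $M^k_{X,\mathrm{Dol}}$, i.e.\ that it is Gieseker semistable as a Higgs bundle with Hilbert polynomial $P_0$. This is the step I expect to need the most care. The Hilbert polynomial of $\mathcal{E}$ is $P_0$, because $\mathcal{E}$ becomes trivial after a finite \'etale pullback and hence has vanishing Chern classes. For semistability, any Higgs subsheaf is in particular a subsheaf of $\mathcal{E}$. The underlying bundle $\mathcal{E}$ is slope-semistable of slope $0$ with vanishing Chern classes, so the usual argument shows every saturated subsheaf has reduced Hilbert polynomial $\le$ that of $\mathcal{O}_X$. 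Concretely, pull back to the trivializing cover, where the bundle is trivial, and note that subsheaves of a trivial bundle have reduced Hilbert polynomial at most that of the structure sheaf. Hence $\mathcal{E}$, and a fortiori $(\mathcal{E},\theta)$, is Gieseker semistable. This gives surjectivity of $h^k_X:M^k_{X,\mathrm{Dol}}\to\mathscr{B}^k_X$ for all $1\le k\le r$.
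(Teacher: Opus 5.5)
Your proof is correct and follows the paper's argument. You factor $s$ via Lemma~\ref{zerodisc}, use the hypothesis together with Lemma~\ref{branchlocus} and purity of the branch locus to make each normalized spectral cover \'etale, apply Lemma~\ref{polystability}, and take the direct sum. The differences are minor improvements: you factor every nonzero $s$, whereas the paper treats $D(s)\neq0$ separately even though the normalized spectral cover was only constructed for irreducible $s$. You also spell out the Hilbert polynomial and Gieseker semistability check that the paper leaves implicit when it places the \'etale trivializable Higgs bundle in $M^k_{X,\mathrm{Dol}}$.
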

\begin{proof}
    For $s=0\in\mathscr{B}^k_X$ it is clear that the trivial bundle
    of rank $k$ is contained in the fiber $(h^k_X)^{-1}(0)$.
    Suppose $0\neq s\in\mathscr{B}^k_X$ is a $\kk$-point 
    such that $D(s)\neq0$.
    Then the normalized spectral cover $\pi'_s:X'_s\to X$
    is \'etale and $(\pi'_{s*}\mathcal{O}_{X'_s},\theta')$
    is an \'etale trivializable Higgs bundle for all $1\le k\le r$
    by Proposition \ref{polystability}.
    If $X$ is projective, this corresponds to a point
    in $M^k_{X,\mathrm{Dol}}$ for all $1\le k\le r$.
    
    Now, let $0\neq s\in\mathscr{B}^k_X$ be a point such that $D(s)=0$.
    Then by Lemma \ref{zerodisc} the characteristic polynomial
    associated with $s$ splits as a product $s=\prod_jt_j$,
    where $t_j\in(m_j)\subset\mathscr{B}^{m_j}_X$ 
    for some $m_j$ such that $\sum_jm_j=k$ and non-vanishing
    discriminant 
    $D(t_j)\in H^0(X,\mathrm{Sym}^{m_j(m_j-1)}\Omega^1_X)\setminus\{0\}$.

    By hypothesis, the normalized spectral cover 
    $\pi'_{t_j}:X'_{t_j}\to X$ is \'etale and the Higgs bundle
    $(\pi'_{t_j*}\mathcal{O}_{X'_{t_j}},\theta'_j)$ is \'etale
    trivializable for each $j$, again by Proposition \ref{polystability}.
    Then the direct sum $\bigoplus_j(\pi'_{t_j*}\mathcal{O}_{X'_{t_j}},\theta'_j)$ 
    is an \'etale trivializable Higgs bundle of rank $k$,
    and thus corresponds to a point in $M^k_{X,\mathrm{Dol}}$ if $X$ 
    is projective for all $1\le k\le r$.
\end{proof}

\begin{remark}\label{ettriv}
    From the proof of Proposition \ref{generalprop},
    we see that in the case where $X$ is projective,
    we have a stronger statement.
    That is, for each point $s\in\mathscr{B}^r_X$,
    the fiber $(h^r_{X,\mathrm{Dol}})^{-1}(s)$ contains a 
    Higgs bundle whose underlying vector bundle is \'etale trivializable. 
\end{remark}

The hypothesis in Proposition \ref{generalprop} 
is still somewhat intractable.
However, they are satisfied by $r$-small varieties.

\begin{theorem}\label{etaleconj}
    Let $X$ be a $r$-small variety,
    as in Definition \ref{rsmall} and let $r\in\mathbb{N}$ be fixed.
    Then $h^k_{X,\mathrm{Dol}}:M^k_{X,\mathrm{Dol}}\to\mathscr{B}^k_X$ 
    is surjective for all $1\le k\le r$.  
\end{theorem}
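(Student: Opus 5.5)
The plan is to reduce the statement to \Cref{generalprop}. Since an $r$-small variety is smooth projective, it suffices to check the hypothesis of that proposition: for each $1\le k\le r$, every non-zero global section $\omega$ of $\mathrm{Sym}^{k(k-1)}\Omega^1_X$ has vanishing locus of codimension at least $2$, i.e. $W^1=\{0\}\subset H^0(X,\mathrm{Sym}^{k(k-1)}\Omega^1_X)$. Once this holds, \Cref{generalprop} gives an \'etale trivializable Higgs bundle in every fiber, hence a point of $M^k_{X,\mathrm{Dol}}$, so $h^k_{X,\mathrm{Dol}}$ is surjective.

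For the verification, fix $k$, set $\mathcal{V}:=\mathrm{Sym}^{k(k-1)}\Omega^1_X$, and suppose $0\neq\omega\in H^0(X,\mathcal{V})$ vanishes along a divisor. Let $D\ge0$ be the divisorial part of $V(\omega)$; it is a non-zero effective Cartier divisor since $X$ is smooth. Then $\omega$ factors as $\mathcal{O}_X(D)\to\mathcal{V}$, an injective map of sheaves, because $\omega$ is a section of $\mathcal{V}(-D)$ after dividing by the local equation of $D$. The heart of the argument is then the slope inequality: $\Omega^1_X$ is slope-semistable by assumption, so $\mathcal{V}$ is slope-semistable in characteristic $0$ (the remark after \Cref{rsmall}). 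Hence the rank-one subsheaf $\mathcal{O}_X(D)$ satisfies $\deg\mathcal{O}_X(D)=\mu(\mathcal{O}_X(D))\le\mu(\mathcal{V})<1$. Since $D$ is a non-zero effective divisor and the polarization is ample, $\deg D=D\cdot\mathcal{O}_X(1)^{d-1}$ is a positive integer, so $\deg D\ge1$. This is a contradiction, so $V(\omega)$ has no divisorial component and $W^1=\{0\}$.

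The main obstacle I expect is justifying the integrality of $\deg D$ and its positivity, with degree computed against the fixed polarization $\mathcal{O}_X(1)$. Here I would use that $\mathcal{O}_X(1)$ is very ample (or replace it by an integral ample class), so $D\cdot H^{d-1}$ is an intersection number of Cartier divisors, hence an integer, and it is positive for a non-zero effective $D$. If the slope normalization in \Cref{rsmall} is meant differently (for instance divided by $H^d$), I would adjust the bound accordingly; with the convention as written the integrality argument closes the gap. Finally, note that if $D(s)=0$ the reduction in \Cref{generalprop} via \Cref{zerodisc} requires the hypothesis for all smaller $k$ as well, which is why $r$-smallness is imposed for every $1\le k\le r$.
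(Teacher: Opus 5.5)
Your proposal is correct and is essentially the paper's proof: both reduce to Proposition~\ref{generalprop} and rule out a non-zero section of $\mathrm{Sym}^{k(k-1)}\Omega^1_X$ with a divisorial zero by exhibiting a line subsheaf of degree $\ge 1$, which contradicts semistability with slope $<1$. The only difference is cosmetic: the paper obtains the line subsheaf as the saturation $\mathcal{L}$ of the image of $\mathcal{O}_X$, whose quotient $\mathcal{L}/\mathcal{O}_X$ is torsion supported on the divisor, and this $\mathcal{L}$ is exactly your $\mathcal{O}_X(D)$, obtained by dividing $\omega$ by a local equation of the divisorial part of $V(\omega)$.
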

\begin{proof}
    It suffices to show that any $r$-small variety satisfies 
    the hypothesis of Proposition \ref{generalprop}.
    By definition, $\Omega^1_X$ is slope-semistable, 
    which implies that $\mathrm{Sym}^j\Omega^1_X$
    is slope-semistable for all $j\ge 1$ since
    we are working in characteristic $0$.

    We claim that 
    $W^1=\{0\}\subset H^0(X,\mathrm{Sym}^{k(k-1)}\Omega^1_X)$
    for all $1\le k\le r$.
    Indeed, suppose that there was a non-zero section 
    $\sigma\in H^0(X,\mathrm{Sym}^{k(k-1)}\Omega^1_X)$
    such that $\sigma$ vanishes along a divisor $E$ in $X$.
    Consider the injective morphism 
    $\mathcal{O}_X\hookrightarrow\mathrm{Sym}^{k(k-1)}\Omega^1_X$
    induced by $\sigma$.
    Denote the quotient by $Q:=\Sym^{k(k-1)}\Omega^1_X/\mathcal{O}_X$.
    Let $\mathcal{L}$ be the saturation of $\mathcal{O}_X$ in 
    $\Sym^{k(k-1)}\Omega^1_X$.
    Then the torsion subsheaf $T$ of $Q$ fits
    into a short exact sequence 
    $0\to \mathcal{O}_X\to\mathcal{L}\to T\to 0$.
    Since $\sigma$ vanishes along $E$ by assumption,
    we have that $T$ has support 
    $\Supp(T)=E$ and we conclude that $\mathcal{L}$
    has strictly positive degree, that is, $\deg(\mathcal{L})\geq 1$. 
    This contradicts the slope-semistability of $\Sym^{k(k-1)}\Omega^1_X$
    which is assumed to be slope-semistable of slope $<1$.
    
    We conclude that $W^1=\{0\}$ as claimed and that 
    $h^k_{X,\mathrm{Dol}}$ is surjective for all $1\le k\le r$ 
    by Proposition \ref{generalprop}.
\end{proof}

It is known that the cotangent bundle of a smooth projective variety with
numerically trivial canonical class is slope-semistable in characteristic zero (see, e.g. \cite[Theorem A]{G16}).
Thus, every $K_X$-numerically trivial variety is
$r$-small, and we arrive at the following special case.

\begin{theorem}\label{ktrivial}
Let $X$ be a smooth projective scheme with $K_X$-numerically trivial.
Then $h^r_{X,\mathrm{Dol}}:M^r_{X,\mathrm{Dol}}\to\mathscr{B}^r_X$
is surjective for all $r\in\mathbb{N}$.
\end{theorem}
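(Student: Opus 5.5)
The plan is to deduce the statement directly from Theorem \ref{etaleconj}: it suffices to check that $X$ is $r$-small in the sense of Definition \ref{rsmall} for every $r\in\mathbb{N}$. Fix the polarization $\mathcal{O}_X(1)$ and write $H$ for the associated ample class. Two properties are needed: that $\Omega^1_X$ is slope-semistable, and that $\mu(\mathrm{Sym}^{k(k-1)}\Omega^1_X)<1$ for all $k$.

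For semistability I will invoke \cite[Theorem A]{G16}: since $K_X\equiv 0$, the cotangent bundle $\Omega^1_X$ is slope-semistable with respect to any polarization. In characteristic $0$ this passes to all symmetric powers, as recalled in the remark after Definition \ref{rsmall}. This external input is the one nontrivial step. If one wants to avoid quoting it in this generality, one can fall back on the Beauville–Bogomolov decomposition after a finite étale cover together with the polystability of the cotangent bundle of each factor. However, \cite{G16} handles the numerically trivial case directly, so I will simply cite it.

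For the slope bound I compute the first Chern class. We have $c_1(\Omega^1_X)=K_X\equiv 0$, so $\deg\Omega^1_X=K_X\cdot H^{d-1}=0$ and $\mu(\Omega^1_X)=0$. For a vector bundle $\mathcal{V}$ of rank $n$ with $c_1(\mathcal{V})=c$, one has
\[
c_1(\mathrm{Sym}^j\mathcal{V})=\binom{n+j-1}{j}\frac{j}{n}\,c,
\]
which is again numerically trivial here. Hence $\mu(\mathrm{Sym}^{k(k-1)}\Omega^1_X)=0<1$ for every $k\ge1$, including the trivial case $k=1$, where the bundle is $\mathcal{O}_X$.

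Thus $X$ is $r$-small for every $r$, and Theorem \ref{etaleconj} gives surjectivity of $h^k_{X,\mathrm{Dol}}$ for all $1\le k\le r$. Since $r$ is arbitrary, surjectivity holds for every $r\in\mathbb{N}$. One remaining detail is that Theorem \ref{etaleconj} is stated for varieties, while here $X$ is a smooth projective scheme. If $X$ is disconnected, I will either assume connectedness implicitly, as the paper does, or apply the argument on each connected component. The slope computations are component-wise, and the argument of Proposition \ref{generalprop} only uses the vanishing-locus condition.
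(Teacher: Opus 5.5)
Your proposal is correct and follows the paper's own argument: the paper also deduces the statement from Theorem \ref{etaleconj}, citing \cite[Theorem A]{G16} for slope-semistability of $\Omega^1_X$ and concluding that $X$ is $r$-small for every $r$. Your Chern class computation $\mu(\mathrm{Sym}^{k(k-1)}\Omega^1_X)=k(k-1)\,\mu(\Omega^1_X)=0<1$ simply makes explicit the slope condition that the paper leaves implicit.
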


Next, we make the following observation about the structure
of the spectral base for varieties which satisfy
the hypothesis in Proposition \ref{generalprop}.

\begin{proposition}\label{irred}
    Let $r\in\mathbb{N}$ be fixed
    and let $X$ be as in Proposition \ref{generalprop}.
    Then there are finitely many \'etale-trivializable vector bundles $\mathcal{V}_1,\dots,\mathcal{V}_m$ of rank $r$ on $X$ for some $m\in\mathbb{N}$ such that for each point $s\in\mathscr{B}^r_X$, the fiber $(h^r_X)^{-1}(s)$ contains some $(\mathcal{V}_i,\theta_i)\in M^r_{X,\mathrm{Dol}}$. 
\end{proposition}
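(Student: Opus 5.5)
From the proof of Proposition \ref{generalprop}, each fiber $(h^r_X)^{-1}(s)$ contains a Higgs bundle whose underlying bundle is a direct sum $\bigoplus_j \pi'_{t_j*}\mathcal{O}_{X'_{t_j}}$. Here $s=\prod_j t_j$ is the factorization of Lemma \ref{zerodisc}, with multiplicities absorbed by repeating factors, and each $\pi'_{t_j}:X'_{t_j}\to X$ is a connected finite \'etale cover of degree $m_j$ with $\sum_j m_j=r$. The underlying vector bundle therefore depends only on the finite tuple of isomorphism classes of connected \'etale covers $(X'_{t_j}\to X)_j$, not on the Higgs field. So it suffices to show that only finitely many such tuples occur, i.e. that $X$ has only finitely many isomorphism classes of connected finite \'etale covers of each degree $m\le r$.

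The plan is as follows. Connected finite \'etale covers of degree $m$ of a connected scheme $X$ correspond to transitive actions of $\pi_1^{\text{\'et}}(X)$ on a set with $m$ elements, up to isomorphism. Such an action is determined by a continuous homomorphism $\pi_1^{\text{\'et}}(X)\to S_m$ up to conjugation. Since $X$ is a smooth proper variety over an algebraically closed field of characteristic $0$, its \'etale fundamental group is topologically finitely generated. Over $\mathbb{C}$ this follows from the comparison with the profinite completion of the topological fundamental group of a compact manifold. Over a general $\kk$ one descends to a finitely generated subfield, embeds it in $\mathbb{C}$, and uses invariance of $\pi_1^{\text{\'et}}$ of proper varieties under extension of algebraically closed fields in characteristic $0$ (SGA1). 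A topologically finitely generated profinite group has only finitely many continuous homomorphisms to the finite group $S_m$, namely at most $(m!)^N$ for $N$ generators. Hence there are finitely many isomorphism classes of degree-$m$ connected \'etale covers for each $m\le r$.

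Next I would enumerate all unordered tuples of such covers with total degree $r$. There are finitely many, and each gives a bundle $\mathcal{V}_i=\bigoplus_j \pi'_{j*}\mathcal{O}$. By Lemma \ref{polystability} each $\mathcal{V}_i$ is \'etale trivializable, and a direct sum of \'etale trivializable bundles is again \'etale trivializable: take the fiber product of the trivializing covers, which is again finite \'etale. This list gives $\mathcal{V}_1,\dots,\mathcal{V}_m$. For a given $s$, the Higgs bundle produced in Proposition \ref{generalprop} has underlying bundle isomorphic to one of the $\mathcal{V}_i$. Transporting the Higgs field $\bigoplus_j\theta'_j$ along this isomorphism gives $\theta_i$, and $(\mathcal{V}_i,\theta_i)$ lies in $M^r_{X,\mathrm{Dol}}$ with spectral datum $s$, as shown there. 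The case $s=0$ is covered by the trivial bundle, which is the tuple of $r$ copies of $\mathrm{id}_X$.

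The main obstacle is the finiteness of degree-$m$ \'etale covers, i.e. the topological finite generation of $\pi_1^{\text{\'et}}(X)$ over an arbitrary algebraically closed field of characteristic $0$. I would cite SGA1 (Exp. X and XIII) for this rather than reprove it. A minor point to check is that the normalized spectral cover of an irreducible factor $t_j$ is connected. This holds because $X'_{t_j}$ is the normalization of the integral scheme $\widetilde{X}_{t_j}$, hence integral. Integrality matters only so that the covers are classified by transitive actions; allowing disconnected covers would in any case still leave finitely many isomorphism classes.
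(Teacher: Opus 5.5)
Your proof is correct and follows the same route as the paper's. You reduce to irreducible factors (the paper does this by induction on $r$, you enumerate tuples of covers of total degree $r$), use the hypothesis plus purity to see that the normalized spectral covers are finite \'etale, and conclude from finiteness of \'etale covers of bounded degree; the paper simply asserts that finiteness, whereas you justify it via topological finite generation of $\pi_1^{\text{\'et}}(X)$ and also check that direct sums of \'etale trivializable bundles stay \'etale trivializable.
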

\begin{proof}
    By induction on $r$, it suffices to prove the assertion for each point in the stratum $(r)\subset\mathscr{B}^r_X$. 
    
    By assumption, the discriminant of each point $s\in(r)$ has a vanishing locus of codimension $\ge2$. The corresponding normalized spectral cover $\pi'_s:X'_s\to X$ is thus \'etale of degree $r$ and by Proposition \ref{polystability}, the vector bundle $\pi'_{s*}\mathcal{O}_{X'_s}$ underlying the Higgs bundle $(\pi'_{s*}\mathcal{O}_{X'_s},\theta')$ is \'etale trivializable.
    
    Since there are finitely many \'etale covers of $X$ of degree $r$
    up to isomorphism, there are finitely many such \'etale trivializable vector bundles
    $\pi'_{s*}\mathcal{O}_{X'_s}$ of rank $r$ up to isomorphism underlying the Higgs bundles $(\pi'_{s*}\mathcal{O}_{X'_s},\theta')$, and we are done.
\end{proof}

\begin{remark}
With notation as in Proposition \ref{irred}, let $V_i$ denote the vector space of all Higgs fields on the vector bundle $\mathcal{V}_i$, for $1\le i\le m$. The Hitchin morphism induces maps $V_i\to\mathscr{B}^r_X$, $\theta\mapsto h^r_X(\mathcal{V}_i,\theta)$ for each $i$. Proposition \ref{irred} then implies that we can express the spectral base as a finite union $\mathscr{B}^r_X=\bigcup_{i=1}^mh^r_X(V_i)$. 

However, it is unclear whether this description would
imply irreducibility of the spectral base in this case. Rather, it may provide us with an example of the spectral base of a $K$-trivial variety where the spectral base is not irreducible.
\end{remark}

\bibliographystyle{plain}

\bibliography{bibliography}

\end{document}